\newtheorem{thm}{Theorem}[section]
\newtheorem{lem}[thm]{Lemma}
\newtheorem{cor}[thm]{Corollary}
\newtheorem{prop}[thm]{Proposition}
\theoremstyle{definition}
\newtheorem*{note*}{Note}
\newtheorem{note}{Note}
\newtheorem{exa}[thm]{Example}
\newtheorem{remark}[thm]{Remark}
\newtheorem{sn}[thm]{Definition}
\newtheorem{snAus}[thm]{Standing assumption}
\newcommand{\bt}{\boldsymbol{t}}
\journal{xxx}
\begin{document}

\begin{frontmatter}
\title{On transitivity dynamics of topological semiflows}

\author{Joseph Auslander}
\ead{jna@math.umd.edu}
\address{Department of Mathematics, University of Maryland, College Park, MD 20742, U.S.A.}

\author{Xiongping Dai}
\ead{xpdai@nju.edu.cn}
\address{Department of Mathematics, Nanjing University, Nanjing 210093, People's Republic of China}

\begin{abstract}
Let $T\times X\rightarrow X, (t,x)\mapsto tx$, be a topological semiflow on a topological space $X$ with phase semigroup $T$. We introduce and discuss in this paper various transitivity dynamics of $(T,X)$.
\end{abstract}

\begin{keyword}
Semiflow; transitivity; sensitivity; thick stability

\medskip
\MSC[2010] 37B05 $\cdot$ 37B20 $\cdot$ 20M20
\end{keyword}
\end{frontmatter}

\section{Introduction}\label{sec0}
Let $T$ be a topological semigroup with a neutral element $e$ and $X$ a non-singleton topological space. We consider a \textit{semiflow} $T\times X\rightarrow X,\ (t,x)\mapsto tx$, denoted by $(T,X)$, which satisfies conditions:
\begin{enumerate}
\item $(t,x)\mapsto tx$ is jointly continuous;
\item $ex=x\ \forall x\in X$; and
\item $t(sx)=(ts)x\ \forall s,t\in T, x\in X$.
\end{enumerate}
Here $T$ is called the \textit{phase semigroup} and $X$ the \textit{phase space}. Moreover,
\begin{enumerate}
\item[a).] when $T$ is a group, we will call $(T,X)$ a \textit{flow} with phase group $T$;
\item[b).] if each $t\in T$ is surjective, then $(T,X)$ will be said to be \textit{surjective};
\item[c).] if each $t\in T$ is bijective, then $(T,X)$ will be called \textit{invertible} and $\langle T\rangle$ stands for the smallest group of self-homeomorphisms of $X$ containing $T$.
\end{enumerate}

Clearly a flow is invertible and an invertible semiflow is certainly surjective. Let $TA=\bigcup_{t\in T}tA$ for all $A\subseteq X$.
If $(T,X)$ is surjective, then $TX=X$; however, the converse need not be true. For example, let $X=\{a,b\}$ a discrete space, $f\colon a\mapsto b\mapsto b$ and $g\colon b\mapsto a\mapsto a$; and let $T=\langle f,g\rangle_+$ be the semigroup generated by $f$ and $g$. Then $TX=X$ but $f(X)=\{b\}\not=X$.

\begin{sn}\label{def1.1}
$(T,X)$ is called \textit{topologically transitive} (TT) if for all non-empty open sets $U,V$ in $X$, the set
\begin{equation*}
N_T(U,V)=\{t\in T\,|\,U\cap t^{-1}V\not=\emptyset\}
\end{equation*}
is non-empty. Clearly, $N_T(U,V)=\{t\in T\,|\,tU\cap V\not=\emptyset\}$. See Theorem~\ref{thm2.1} for two equivalent descriptions of TT.
\end{sn}

\begin{sn}\label{def1.2}
$(T,X)$ is called \textit{point-transitive} (PT) if the set
\begin{equation*}
\mathrm{Tran}\,(T,X)=\{x\in X\,|\,Tx\textrm{ is dense in }X\}
\end{equation*}
is non-empty. Every point of $\mathrm{Tran}\,(T,X)$ is called a \textit{transitive point} of $(T,X)$.
\end{sn}

Both of TT and PT are the most important and basic transitivity of semiflows.
If the phase space $X$ is not separable, then $\mathrm{TT}\not\Rightarrow \mathrm{PT}$ in general even for $T=\mathbb{Z}$ (cf.~\cite[Example~4.17]{EE}). However, if $X$ is a Polish space, then $(T,X)$ is TT iff $\mathrm{Tran}\,(T,X)$ is a dense $G_\delta$ set of $X$ (cf.~\cite{KM} and \cite[Basic fact~1]{DT}).

Let $(T,X)$ be a flow with phase group $T$. Then by $\textrm{cls}_XTsx=\textrm{cls}_XTx$ for all $s\in T$ and $x\in X$, $\mathrm{Tran}\,(T,X)$ is invariant so if $(T,X)$ is PT, $\mathrm{Tran}\,(T,X)$ is dense in $X$ and $(T,X)$ is thus TT.

However, $\mathrm{PT}\not\Rightarrow\mathrm{TT}$ in general if $(T,X)$ is not a flow even for $X$ is a compact metric space. Let us consider a simple counterexample as follows.

\begin{exa}\label{exa1.3}
Let $T=(\mathbb{Q}_+,+)$ the nonnegative rational numbers semigroup and $X=\mathbb{R}_+\cup\{+\infty\}$ with the usual one-point compactification topology. Define
\begin{equation*}
T\times X\rightarrow X,\quad (t,x)\mapsto t+x.
\end{equation*}
Clearly, $\mathrm{Tran}\,(T,X)=\{0\}$ so $(T,X)$ is PT. But $(T,X)$ is not TT, for $\mathrm{Tran}\,(T,X)$ is not a dense $G_\delta$-subset of $X$. In fact, for open subsets $U=(10,12)$ and $V=(5,6)$ of $X$, we have $N_T(U,V)=\emptyset$.
\end{exa}

As mentioned before, if $(T,X)$ is PT with $\mathrm{Tran}\,(T,X)$ dense in $X$ then $(T,X)$ is TT. But this sufficient condition is not a necessary one for TT with non-Polish phase space. In this note, we will consider the following question:
\begin{quote}
{\it When does $\mathrm{Tran}\,(T,X)\not=\emptyset$ imply TT for a semiflow $(T,X)$ with $T$ not a group?}
\end{quote}

We will also consider other kinds of important transitivity dynamics and their relationships with TT and PT in this paper.

In particular, in $\S\ref{sec4}$ we shall consider ``universally transitive'' semiflow with compact Hausdorff phase space and show that a universally transitive PT semiflow is minimal distal (cf.~Definition~\ref{def4.2}.2 and Theorem~\ref{thm4.16}).

In $\S\ref{sec6}$ we shall consider ``syndetically transitive'' semiflow and show that a non-minimal syndetically transitive semiflow is ``syndetically sensitive'' (cf.~Definition~\ref{def6.1} and Theorem~\ref{thm6.12}). This will generalizes some results of Glasner and Weiss on \textit{E}-systems.
\section{Prolongation and nonwandering points}\label{sec2}

Let $(T,X)$ be a semiflow on a topological space $X$ and $A\subset X$. We say $A$ is \textit{invariant} if $Tx\subseteq A$ for all $x\in A$, i.e., $TA\subseteq A$; and $A$ is called \textit{negatively}- or \textit{$T^{-1}$-invariant} if $t^{-1}x\subseteq A$ for all $x\in A$ and $t\in T$, i.e., $T^{-1}A\subseteq A$. By $\textrm{Int}_XA$ we will denote the interior of $A$ relative to $X$.

In this section we will consider basic properties of TT. We can easily verify the following two elementary sufficient and necessary conditions for TT of semiflow.

\begin{thm}\label{thm2.1}
Let $(T,X)$ be a semiflow on a topological space $X$ with phase semigroup $T$. Then the following conditions are pairwise equivalent.
\begin{enumerate}
\item[$(1)$] $(T,X)$ is TT.
\item[$(2)$] Every invariant set with non-empty interior is dense in $X$.
\item[$(3)$] Every non-empty, open, and $T^{-1}$-invariant set is dense in $X$.
\end{enumerate}
\end{thm}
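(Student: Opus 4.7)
The plan is to prove the cycle $(1)\Rightarrow(2)\Rightarrow(3)\Rightarrow(1)$, exploiting the duality between $T$-invariant closed sets and $T^{-1}$-invariant open sets under complementation. The conceptual input is the trivial identity $x\in t^{-1}(tx)$, which translates $T^{-1}$-invariance of an open set into $T$-invariance of its complement.

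For $(1)\Rightarrow(2)$ I would set $U=\mathrm{Int}_X A$ for an invariant $A$ with non-empty interior and apply TT with this $U$ against an arbitrary non-empty open $V$; invariance turns $tU\cap V\ne\emptyset$ into $A\cap V\ne\emptyset$, so $A$ meets every non-empty open set and hence is dense. For $(3)\Rightarrow(1)$ the canonical $T^{-1}$-invariant open set attached to a non-empty open $V$ is $W:=\bigcup_{t\in T}t^{-1}V$: it is open by continuity of each $t$, contains $V$ (since $e\in T$), and is $T^{-1}$-invariant because $tx\in V$ and $y\in s^{-1}x$ give $(ts)y\in V$, whence $y\in W$. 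Density of $W$ granted by $(3)$ then meets any non-empty open $U$, producing $t\in N_T(U,V)$.

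The delicate step is $(2)\Rightarrow(3)$. Given $W$ non-empty, open, and $T^{-1}$-invariant, I would first show that the complement $X\setminus W$ is \emph{invariant}: if $x\notin W$ but $tx\in W$ for some $t$, then $T^{-1}$-invariance of $W$ forces $x\in t^{-1}\{tx\}\subseteq W$, a contradiction. Hence $X\setminus W$ is closed and invariant; if it had non-empty interior, $(2)$ would make it dense and therefore equal to all of $X$, contradicting $W\ne\emptyset$. Thus $\mathrm{Int}_X(X\setminus W)=\emptyset$, i.e.\ $W$ is dense. I expect this invariance-via-complementation observation to be the only non-routine ingredient; no surjectivity or group hypothesis is required, and the remaining implications are direct unpackings of TT.
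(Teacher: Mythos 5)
Your proof is correct, and it takes a slightly different route than the paper: you close the cycle $(1)\Rightarrow(2)\Rightarrow(3)\Rightarrow(1)$, whereas the paper uses TT as the hub and proves $(1)\Leftrightarrow(2)$ and $(1)\Leftrightarrow(3)$ separately. Your $(1)\Rightarrow(2)$ and $(3)\Rightarrow(1)$ coincide in substance with the paper's (the paper phrases $(1)\Rightarrow(2)$ contrapositively via the pair $U=\mathrm{Int}_XX_0$, $V=X\setminus\mathrm{cls}_XX_0$, and its $(3)\Rightarrow(1)$ uses exactly your set $T^{-1}V$). The genuinely new ingredient in your version is the direct implication $(2)\Rightarrow(3)$ via the observation that the complement of a $T^{-1}$-invariant open set is invariant (and closed, so that density forces it to be all of $X$); this duality is implicit but never isolated in the paper, which instead handles $(2)\Rightarrow(1)$ by feeding the forward orbit $TU$ into condition $(2)$, using $e\in T$ to guarantee $U\subseteq TU$. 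The trade-off is minor: the paper's decomposition makes the role of the neutral element in $(2)\Rightarrow(1)$ explicit (a point it remarks on afterwards), while yours localizes the use of $e$ entirely in $(3)\Rightarrow(1)$ and makes the closed-invariant versus open-$T^{-1}$-invariant complementation the visible mechanism. Both arguments are complete and of the same level of generality.
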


\begin{proof}
Condition $(1)\Rightarrow(2)$. To be contrary assume there is an invariant set $X_0$ with $\textrm{Int}_XX_0\not=\emptyset$ such that $\textrm{cls}_XX_0\not=X$.
Let $U=\textrm{Int}_XX_0$ and set $V=X\setminus\textrm{cls}_XX_0$; then $U\not=\emptyset$ and $V\not=\emptyset$ such that $N_T(U,V)=\emptyset$, a contradiction to TT.

Condition $(2)\Rightarrow(1)$. Let $U,V$ be two non-empty open subsets of $X$. Since $e\in T$ so that $U\subseteq TU$, $TU$ is an invariant set with non-empty interior in $X$. Hence $TU\cap V\not=\emptyset$ so that $tU\cap V\not=\emptyset$ for some $t\in T$. This implies that $U\cap t^{-1}V\not=\emptyset$ for some $t\in T$. Thus $(T,X)$ is TT.

Condition $(3)\Rightarrow(1)$. Given non-empty open sets $U,V$ in $X$, since $T^{-1}V$ is $T^{-1}$-invariant open non-empty, $T^{-1}V$ is dense in $X$ so $U\cap t^{-1}V\not=\emptyset$ for some $t\in T$. Thus $(T,X)$ is TT.

Condition $(1)\Rightarrow(3)$. Let $U$ be a non-empty open $T^{-1}$-invariant subset of $X$. If $V=X-\textrm{cls}_XU$ were not empty, then $N_T(V,U)=\emptyset$ and this contradicts TT. Thus $U$ must be dense in $X$.

This proves Theorem~\ref{thm2.1}.
\end{proof}

So, if $X$ contains a non-dense open $T$- or $T^{-1}$-invariant non-empty set, then $(T,X)$ is not TT. Moreover, if $(T,X)$ is TT with $\textrm{Int}_XTx\not=\emptyset$, then $x\in\mathrm{Tran}\,(T,X)$.

In (2) $\Rightarrow$ (1) of Theorem~\ref{thm2.1}, the neutral element $e$ plays a role.
By Theorem~\ref{thm2.1} we can obtain the following well-known sufficient and necessary condition.

\begin{cor}\label{cor2.2}
Let $(T,X)$ be a flow; then $(T,X)$ is TT if and only if every invariant open non-empty subset is dense in $X$.
\end{cor}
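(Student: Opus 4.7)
The plan is to deduce the corollary directly from Theorem~\ref{thm2.1}, exploiting only the fact that the acting semigroup is a group in the flow setting.

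For the forward implication, if $(T,X)$ is TT then by (1)$\Rightarrow$(2) of Theorem~\ref{thm2.1} every invariant subset with non-empty interior is dense in $X$. Since an open non-empty invariant set trivially has non-empty interior, the conclusion follows immediately; note that this direction does not even require $T$ to be a group.

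For the converse, I would use the observation that when $T$ is a group, a subset $A\subseteq X$ is $T$-invariant if and only if it is $T^{-1}$-invariant. Indeed, closure of $T$ under inverses means that the families of conditions $\{tA\subseteq A\}_{t\in T}$ and $\{t^{-1}A\subseteq A\}_{t\in T}$ are identical: every element of one family is an element of the other under the substitution $t\mapsto t^{-1}$. Consequently, the hypothesis of the corollary --- that every non-empty open invariant subset is dense --- is nothing other than condition (3) of Theorem~\ref{thm2.1}, so (3)$\Rightarrow$(1) of that theorem yields TT.

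There is no genuine obstacle: the entire content of the corollary is that, for a group action, $T$-invariance and $T^{-1}$-invariance of subsets are indistinguishable, so the two open-set conditions (2) and (3) of Theorem~\ref{thm2.1} collapse into a single statement about open invariant sets. The only small point to write carefully is this equivalence of invariance notions, which is a one-line consequence of closure of $T$ under inverses.
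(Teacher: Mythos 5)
Your proof is correct and is exactly the intended derivation: the paper states Corollary~\ref{cor2.2} as an immediate consequence of Theorem~\ref{thm2.1} without writing out the details, and your observation that for a group action $T$-invariance and $T^{-1}$-invariance of a subset coincide (so that the hypothesis is precisely condition (3) of the theorem) is the right way to fill them in.
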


The following notion is a kind of generalized orbit closure of a semiflow $(T,X)$ on a topological space $X$.

\begin{sn}\label{def2.3}
Let $(T,X)$ be a semiflow with $T$ a locally compact non-compact topological semigroup. Let $\mathscr{K}_e$ be the collection of compact neighborhoods of $e$ in $T$ and $x\in X$.
\begin{enumerate}
\item[(1)] We say $y\in X$ is in the \textit{prolongation} of $x$, denoted by $y\in \Omega_T(x)$,\index{$\Omega_T(x)$} if
    for all neighborhoods $U$ of $x$ and $V$ of $y$ in $X$ and all $K\in\mathscr{K}_e$ in $T$, one can find some $x^\prime\in U$ and $t\in T\setminus K$ such that $tx^\prime\in V$.
\item[(2)] If $x\in\Omega_T(x)$, write $x\in\Omega\,(T,X)$\index{$\Omega\,(T,X)$}, then $x$ is called a \textit{nonwandering point} of $(T,X)$. If $\Omega\,(T,X)=X$, then $(T,X)$ itself is called a \textit{nonwandering} semiflow.
\end{enumerate}

Clearly,
\begin{itemize}
\item Both $\Omega_T(x)$ and $\Omega\,(T,X)$ are closed subsets of $X$; moreover, they are invariant whenever $T$ is a group or an abelian semigroup.
\end{itemize}

If $x$ is a nonwandering point of $(T,X)$, then for all neighborhood $U$ of $x$, there is some $t\in T$ outside any compact subset of $T$ such that $U\cap tU\not=\emptyset$.
\end{sn}

\begin{lem}\label{lem2.4}
Let $(T,X)$ be a semiflow with $T$ a locally compact non-compact semigroup and with $X$ a locally compact connected Hausdorff space. Then $(T,X)$ is TT if and only if the prolongation of every point of $X$ is $X$, i.e., $\Omega_T(x)=X\ \forall x\in X$.
\end{lem}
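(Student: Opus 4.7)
The ``if'' direction is immediate from the definitions: given non-empty open $U,V\subseteq X$, pick any $x\in U$ and $y\in V$; since $y\in\Omega_T(x)$, applying the definition to any fixed $K\in\mathscr{K}_e$ produces $x'\in U$ and $t\in T\setminus K$ with $tx'\in V$, so $N_T(U,V)\ni t$ and $(T,X)$ is TT.

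For the converse, I argue by contradiction: assume $(T,X)$ is TT and that $y_0\notin\Omega_T(x_0)$ for some $x_0,y_0\in X$. Unpacking the negation of Definition~\ref{def2.3} gives open neighborhoods $U\ni x_0$, $V\ni y_0$ and $K\in\mathscr{K}_e$ with $(T\setminus K)U\cap V=\emptyset$, i.e.\ $N_T(U,V)\subseteq K$. Using the local compactness of $X$, I shrink $U$ so that $\overline U$ is compact; then $K\overline U$ is the image under the jointly continuous action of the compact set $K\times\overline U$, so it is compact and closed in the Hausdorff space $X$. By the TT hypothesis, Theorem~\ref{thm2.1}(2) applied to the invariant non-empty open set $U$ forces $TU$ to be dense in $X$. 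Decomposing $TU=KU\cup(T\setminus K)U$ and using $(T\setminus K)U\cap V=\emptyset$, one has $V\cap TU\subseteq KU\subseteq K\overline U$; density of $V\cap TU$ in $V$ then gives $V\subseteq K\overline U$, so in particular $\overline V$ is compact.

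The substantive step is to rule out a non-empty open $V$ lying inside the compact ``orbit slice'' $K\overline U$, using the connectedness of $X$ and the non-compactness of $T$. A net-limit argument, approximating $y_0$ by pairs $(u_\alpha,t_\alpha)$ with $u_\alpha\to x_0$ and $t_\alpha u_\alpha\to y_0$ and distinguishing (via the one-point compactification of $T$) whether $t_\alpha$ has a subnet in some compactum or leaves every compact set, shows that every $y\in X$ satisfies $y\in Tx_0\cup\Omega_T(x_0)$. Consequently $W:=X\setminus\Omega_T(x_0)$ is a non-empty open subset of the invariant set $Tx_0$, and a second application of Theorem~\ref{thm2.1}(2) makes $Tx_0$ dense in $X$. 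Substituting the particular point $x_0\in U$ in the hypothesis sharpens $\{t\in T:tx_0\in V\}\subseteq K$, so $V\cap Tx_0\subseteq Kx_0$; density of $Tx_0$ and compactness of $Kx_0$ then promote this to $V\subseteq Kx_0$.

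The final contradiction is to be obtained by picking $s\in T\setminus K$ (non-empty because $T$ is non-compact) and applying TT to the pair $(U,s^{-1}V)$ so as to produce $r\in T$ and $u\in U$ with $(sr)u=s(ru)\in V$, forcing $sr\in N_T(U,V)\subseteq K$; the desired contradiction arises by arranging $sr\notin K$ through a careful choice of $s$. I expect the principal obstacle to lie precisely here: without group structure one cannot directly ensure $sT\cap K=\emptyset$, nor is $s^{-1}V$ automatically non-empty for a semiflow, so the connectedness of $X$ must be genuinely used — to guarantee both the non-emptiness of $s^{-1}V$ (otherwise the orbit $sX$ would avoid the open set $V\subseteq Kx_0$, which against the density of $Tx_0$ in the connected space $X$ leads to a partition of $X$ into two non-empty closed pieces) and the escape of $sr$ from $K$ (which uses that $X$ cannot be covered by a single compact orbit slice unless it is itself compact, a case that must then be handled by the same TT+connectedness dichotomy). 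Converting these topological inputs into the required ``escape time'' is the delicate step of the proof.
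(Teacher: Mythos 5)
Your ``if'' direction is correct and coincides with the paper's: given non-empty open $U,V$, pick $x\in U$, $y\in V$, and use $y\in\Omega_T(x)$ with any fixed compact neighborhood $K$ of $e$ to get $t\in T\setminus K$ with $tU\cap V\neq\emptyset$.

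The ``only if'' direction, however, is not a proof: it terminates in a step that you yourself flag as unresolved. The reductions you carry out are sound as far as they go --- negating $y_0\in\Omega_T(x_0)$ to obtain $N_T(U,V)\subseteq K$, shrinking $U$ so that $\overline{U}$ is compact, using density of $TU$ (resp.\ of $Tx_0$) to conclude $V\subseteq K\overline{U}$ (resp.\ $V\subseteq Kx_0$), and the net dichotomy $X=Tx_0\cup\Omega_T(x_0)$ --- but no contradiction is ever extracted from them. Your closing paragraph proposes to apply TT to the pair $(U,s^{-1}V)$ for some $s\in T\setminus K$ and to ``arrange $sr\notin K$ through a careful choice of $s$,'' and then immediately lists the reasons this cannot be done ($s^{-1}V$ may be empty for a semiflow, and $sT\cap K=\emptyset$ cannot be forced without group structure). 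Identifying where the difficulty lies is not the same as overcoming it, so the forward implication remains unproved.

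For comparison, the paper's argument is direct and short: with $\overline{U}$ compact, the set $K\overline{U}$ is compact, hence closed in the Hausdorff space $X$; the paper asserts (invoking connectedness of $X$, so that the non-empty proper open set $V$ is not closed) that $V\setminus K\overline{U}\neq\emptyset$, and then applies TT to the pair $(U,\,V\setminus K\overline{U})$: any $t$ with $tU\cap(V\setminus K\overline{U})\neq\emptyset$ automatically lies outside $K$. Thus the single claim $V\setminus K\overline{U}\neq\emptyset$ is exactly the negation of the inclusion $V\subseteq K\overline{U}$ that your contradiction argument arrives at and cannot refute: you located the crux correctly but did not supply the one missing ingredient, and everything you build around it ($X=Tx_0\cup\Omega_T(x_0)$, density of $Tx_0$, $V\subseteq Kx_0$) is machinery that the paper's route renders unnecessary. (It is fair to add that the paper's one-line justification of $V\setminus K\overline{U}\neq\emptyset$ is itself very terse --- ``$V$ open and not closed'' together with ``$K\overline{U}$ compact'' does not on its face exclude $V\subseteq K\overline{U}$ --- so your instinct that this is the delicate point is sound; but your proposal does not close the gap either, and as submitted the proof is incomplete.)
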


\begin{proof}
Let $(T,X)$ be TT and $x\in X$. Let $y\in X$ and let $U_x$ and $V_y$ be neighborhoods of $x$ and $y$ in $X$, respectively. Let $K$ be a compact neighborhood of $e$ in $T$.

Since $X$ is locally compact, we can assume $U_x$ is compact. Since $X$ is connected, we may suppose $V_y$ is open but not closed. Then as $KU_x$ is compact, it follows that $V_y\setminus KU_x\not=\emptyset$ so by TT we can conclude that $N_T(U,V)\cap(T\setminus K)\not=\emptyset$. Thus, there are $t\in T\setminus K$ and $x^\prime\in U_x$ with $tx^\prime\in V_y$. So $y\in\Omega_T(x)$.

Conversely, assume $\Omega_T(x)=X$ for all $x\in X$ and let $U,V$ be two non-empty open subsets of $X$. Take some $x\in U$ and then by $\Omega_T(x)\cap V\not=\emptyset$ it follows that $N_T(U,V)\not=\emptyset$.
\end{proof}

Thus, under the same situation of Lemma~\ref{lem2.4}, if $(T,X)$ is TT, then every point of $X$ is nonwandering. In particular, we can obtain the following.

\begin{cor}\label{cor2.5}
Let $(T,X)$ be a semiflow with $T$ a locally compact non-compact semigroup and with $X$ a topological manifold. If $(T,X)$ is TT, then $(T,X)$ is nonwandering.
\end{cor}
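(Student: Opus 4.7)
The plan is to derive the corollary as a direct application of Lemma~\ref{lem2.4}. By definition, a topological manifold is Hausdorff and locally Euclidean, so it is automatically locally compact and locally connected. Under the standard convention that a topological manifold is connected, $X$ satisfies the phase-space hypotheses of Lemma~\ref{lem2.4} (locally compact, connected, Hausdorff), and the hypothesis on $T$ is exactly the one assumed in the corollary.

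Granting this, I would simply invoke Lemma~\ref{lem2.4}: since $(T,X)$ is TT, one has $\Omega_T(x)=X$ for every $x\in X$. Specializing to the point $y=x$, this yields $x\in\Omega_T(x)$ for every $x\in X$, which by Definition~\ref{def2.3}(2) says that every point of $X$ is nonwandering. Hence $\Omega\,(T,X)=X$, i.e., $(T,X)$ is a nonwandering semiflow, as claimed.

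The only real subtlety is the connectedness assumption built into Lemma~\ref{lem2.4}. If one does not wish to adopt the convention that manifolds are connected, I would argue locally: since $X$ is locally connected, each connected component is open and is itself a connected topological manifold, and each point $x$ lies in a Euclidean chart $W\cong\mathbb{R}^n$ which is connected. The argument underlying Lemma~\ref{lem2.4} can then be transferred verbatim inside such a chart, the compactness of $KU_x$ combined with the unboundedness of $W$ producing an open subset of $W$ disjoint from $KU_x$ on which TT is applied to extract the desired $x'\in U_x$ and $t\in T\setminus K$ with $tx'$ in a prescribed neighborhood of $x$. Either way, the conclusion that every $x\in X$ is nonwandering follows without further calculation, and the main obstacle is the bookkeeping needed to localize Lemma~\ref{lem2.4} to a chart rather than any substantive new argument.
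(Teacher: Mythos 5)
Your first two paragraphs reproduce exactly what the paper does: Corollary~\ref{cor2.5} is stated with no separate proof, as an instance of the remark that under the hypotheses of Lemma~\ref{lem2.4} topological transitivity forces $x\in\Omega_T(x)$ for every $x$. Your observation that a topological manifold need not be connected, so that Lemma~\ref{lem2.4} does not literally apply, is a legitimate criticism of the paper (there is no standard convention that manifolds are connected), and restricting attention to a Euclidean chart is the natural way to try to repair this.

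However, the chart argument in your last paragraph does not close the gap, and the reason is worth spelling out because it infects Lemma~\ref{lem2.4} itself. To prove $x\in\Omega_T(x)$ you must, for an \emph{arbitrary} prescribed neighborhood $V$ of $x$ and arbitrary $K\in\mathscr{K}_e$, produce $t\in T\setminus K$ and $x'\in U_x$ with $tx'\in V$. Your plan (like the paper's proof of Lemma~\ref{lem2.4}) is to find a non-empty open target inside $V$ that is disjoint from the compact set $K\,U_x$ and then invoke TT. But $e\in K$, so $K\,U_x\supseteq U_x$ is itself a neighborhood of $x$; once $V\subseteq\mathrm{Int}_XU_x$, the set $V\setminus K\,U_x$ is empty and no such target exists, no matter how non-compact the ambient chart is. This is not a bookkeeping issue: take $X=\mathbb{R}$ (a connected topological manifold), $T=(\mathbb{R},+)$ (locally compact, non-compact), acting by $(t,x)\mapsto t+x$. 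This flow is TT, yet for $U=V=(x-1,x+1)$ and $K=[-3,3]$ every $t$ with $(t+U)\cap V\neq\emptyset$ satisfies $|t|<2$, so $t\in K$; hence no point is nonwandering. So the corollary as stated (and the ``only if'' direction of Lemma~\ref{lem2.4}) fails for non-compact phase spaces, and any proof that merely cites Lemma~\ref{lem2.4}, or reruns its argument inside a chart, inherits the same defect.
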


In view of the question we are concerned with, the following corollary of Lemma~\ref{lem2.4} is somewhat of interest.

\begin{cor}\label{cor2.6}
Let $(T,X)$ be PT with $T$ a locally compact non-compact semigroup and with $X$ a locally compact connected Hausdorff space. Then $(T,X)$ is TT iff $\Omega_T(x)\cap\mathrm{Tran}\,(T,X)\not=\emptyset$ for all $x\in X$.
\end{cor}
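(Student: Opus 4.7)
The forward direction should be a one-line consequence of what is already in place. Given that $(T,X)$ is TT, Lemma~\ref{lem2.4} applies (we have all the required hypotheses on $T$ and $X$) and gives $\Omega_T(x)=X$ for every $x\in X$. Since $(T,X)$ is PT, $\mathrm{Tran}\,(T,X)\neq\emptyset$, and so $\Omega_T(x)\cap\mathrm{Tran}\,(T,X)=\mathrm{Tran}\,(T,X)\neq\emptyset$ for every $x$.

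For the reverse direction the cleanest plan is to verify TT directly rather than route back through Lemma~\ref{lem2.4}. Let $U,V$ be non-empty open subsets of $X$, pick any $x\in U$, and, by hypothesis, some $z\in\Omega_T(x)\cap\mathrm{Tran}\,(T,X)$. Density of $Tz$ supplies $s\in T$ with $sz\in V$; by joint continuity of the action, $W:=\{w\in X\,|\,sw\in V\}$ is an open neighborhood of $z$. Now feed $U$, $W$, and any fixed $K\in\mathscr{K}_e$ into the defining clause of $z\in\Omega_T(x)$: it produces $x'\in U$ and $t\in T\setminus K$ with $tx'\in W$, so that $(st)x'\in V$. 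Hence $st\in N_T(U,V)$, and TT follows.

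The step that requires any care is making sure the compact-neighborhood quantifier in the definition of prolongation does not create trouble. For TT we do not need the ``escape'' information ($t\notin K$) at all, so any single $K\in\mathscr{K}_e$ will do, and the existence of such a $K$ is guaranteed by local compactness of $T$; one could, in fact, dispense with local compactness of $T$ entirely on this side of the equivalence. Beyond that the argument is routine — the two genuine ingredients are continuity of the map $w\mapsto sw$ (which turns $sz\in V$ into an open neighborhood of $z$) and the density of $Tz$ (which provides $s$), both direct consequences of PT and the continuity axiom of a semiflow.
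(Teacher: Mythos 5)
Your proof is correct, and while the forward direction coincides with the paper's (both just quote Lemma~\ref{lem2.4} plus PT), your sufficiency argument is genuinely different. The paper introduces the auxiliary set $D(x)=\{y\,|\,\exists\, x_n\to x,\ t_nx_n\to y\}$, observes $\Omega_T(x)\subseteq D(x)$, and uses the (asserted but unproved) fact that $D(x)$ is closed and invariant to bootstrap from one transitive point to $D(x)=X$, after which $N_T(U,V)\neq\emptyset$ follows by taking $x\in U$ and approximating a point of $V$. You instead stay with $\Omega_T(x)$ itself and replace the invariance argument by a pull-back: density of $Tz$ gives $s$ with $sz\in V$, continuity of $w\mapsto sw$ makes $W=s^{-1}V$ an open neighborhood of $z$, and the defining clause of $z\in\Omega_T(x)$ applied to $(U,W,K)$ yields $x'\in U$ with $(st)x'=s(tx')\in V$. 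Your route is more self-contained (it avoids verifying that $D(x)$ is closed and invariant, which in a general topological space needs a small net-diagonalization argument) and isolates exactly which hypotheses are used. One caveat on your closing aside: you cannot literally ``dispense with local compactness of $T$'' in this direction, because Definition~\ref{def2.3} quantifies over $K\in\mathscr{K}_e$, and if $\mathscr{K}_e=\emptyset$ the prolongation condition becomes vacuous, $\Omega_T(x)=X$ for trivial reasons, and the implication would then contradict Example~\ref{exa1.3}; what is true is that your argument only needs one $K$, not the full strength of the escape condition.
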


\begin{proof}
The necessity follows at once from Lemma~\ref{lem2.4}. To prove the sufficiency, let $D(x)$ be the invariant closed subset of $(T,X)$ defined as follows:
$y\in D(x)$ iff there are nets $\{x_n\}$ in $X$ and $\{t_n\}$ in $T$ with $x_n\to x$ and $t_nx_n\to y$.
Then $\Omega_T(x)\subseteq D(x)$ and so $D(x)\cap\mathrm{Tran}\,(T,X)\not=\emptyset$ for all $x\in X$. Thus $D(x)=X$ for all $x\in X$. Let $U,V$ be two non-empty open subsets of $X$. Take some $x\in U$ and then by $D(x)\cap V\not=\emptyset$ it follows that $N_T(U,V)\not=\emptyset$.
\end{proof}

\begin{remark}
The prolongation $\Omega_T(x)$ relies on the topology of $T$. For example, let $(t,x)\mapsto tx$ be a classical $C^0$-flow on a manifold $X$ with phase group $\mathbb{R}$. If $\mathbb{R}$ is under the discrete topology, then every point of $X$ is nonwandering so that this flow is nonwandering. Of course, there are nonwandering classical $C^0$-flows if $\mathbb{R}$ is under the usual topology.
\end{remark}
\section{Pre-recurrent transitive points of semiflows}\label{sec3}
Let $(T,X)$ be a semiflow on a topological space $X$ with phase semigroup $T$. We first introduce a notion which only works for semiflows with $T$ not groups. By $\textrm{cls}_XA$ we will denote the closure of a set $A$ in $X$.

\begin{sn}\label{def3.1}
A point $x\in X$ is called \textit{pre-recurrent} for $(T,X)$ if $x\in\textrm{cls}_XTsx$ for every $s\in T$. We could also define \textit{pointwise pre-recurrent}.
\end{sn}

\begin{sn}\label{def3.2}
An $x\in X$ is called a \textit{minimal point} of $(T,X)$ if $\textrm{cls}_XTx$ is a minimal subset of $(T,X)$; that is, $\textrm{cls}_XTy=\textrm{cls}_XTx$ for all $y\in\textrm{cls}_XTx$.
\end{sn}

The point $x=0$ in Example~\ref{exa1.3} is transitive but not pre-recurrent. By definitions the following lemma is evident and so we will omit its proof here.

\begin{lem}\label{lem3.3}
Each minimal point is pre-recurrent for $(T,X)$.
\end{lem}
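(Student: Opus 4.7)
The plan is to unpack the two definitions and observe that the conclusion is almost immediate once we use the neutral element $e \in T$. Let $x$ be a minimal point, and fix an arbitrary $s \in T$; I need to show $x \in \mathrm{cls}_X T s x$.

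First I would set $y = sx$. Since $y = sx \in Tx \subseteq \mathrm{cls}_X Tx$, the point $y$ lies in the minimal set $\mathrm{cls}_X Tx$. By the definition of minimal point (Definition~\ref{def3.2}), this forces $\mathrm{cls}_X T y = \mathrm{cls}_X T x$, i.e., $\mathrm{cls}_X T s x = \mathrm{cls}_X T x$.

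Next I would use the presence of the neutral element: $x = e x \in T x \subseteq \mathrm{cls}_X T x = \mathrm{cls}_X T s x$. Since $s$ was arbitrary, $x \in \mathrm{cls}_X T s x$ for every $s \in T$, so $x$ is pre-recurrent.

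There is essentially no obstacle here; the statement is a direct unwinding of the definitions, and the only structural ingredient beyond the definitions is that $e \in T$, which ensures $x \in Tx$. Note also that one does not need $T$ to be a group, nor any separation or compactness hypotheses on $X$, so the lemma really is purely formal.
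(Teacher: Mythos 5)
Your proof is correct and is exactly the direct definitional argument the paper has in mind (the paper omits the proof, calling the lemma ``evident by definitions''): $sx\in\textrm{cls}_XTx$ forces $\textrm{cls}_XTsx=\textrm{cls}_XTx$ by minimality, and $x=ex\in Tx$ puts $x$ in that set. Nothing to add.
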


\begin{sn}\label{def3.4}
\begin{enumerate}
\item A subset $A$ of $T$ is called \textit{thick} if for all compact subset $K$ of $T$, one can find an element $t\in T$ such that $Kt\subseteq A$.
\item A subset $S$ of $T$ is called \textit{syndetic} if there is a compact subset $K$ of $T$ such that $Kt\cap S\not=\emptyset$ for all $t\in T$.
\begin{itemize}
\item A subset of $T$ is syndetic iff it intersects non-voidly every thick set of $T$ (cf., e.g.,~\cite[Lemma~2.5]{AD}).
\end{itemize}
\item A point $x$ is called \textit{almost periodic} for $(T,X)$ if for all neighborhood $U$ of $x$, the set
\begin{equation*}
N_T(x,U)=\{t\in T\,|\,tx\in U\}
\end{equation*}
is syndetic in $T$.
\end{enumerate}
\end{sn}

It should be noted that since here $X$ is not necessarily a regular space, an almost periodic point need not be a minimal point. Conversely, since $X$ is not necessarily compact, a minimal point need not be an almost periodic point.

\begin{lem}\label{lem3.5}
Every almost periodic point is pre-recurrent for $(T,X)$.
\end{lem}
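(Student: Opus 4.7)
The plan is to exploit the syndetic/thick duality recorded in Definition~\ref{def3.4}, namely that a subset of $T$ is syndetic iff it meets every thick subset of $T$. Almost periodicity directly gives us syndetic return-time sets, so pre-recurrence will follow once we identify, for each fixed $s\in T$, a thick set whose intersection with the return-time set forces the orbit $Tsx$ into a prescribed neighborhood of $x$.

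Fix $s\in T$ and an arbitrary neighborhood $U$ of $x$. Since $x$ is almost periodic, $N_T(x,U)=\{t\in T\,|\,tx\in U\}$ is syndetic in $T$. The key (and only) observation is that the set $Ts=\{ts\,|\,t\in T\}$ is thick: given any compact $K\subseteq T$, one simply takes the witness element to be $s$ itself, and then $Ks\subseteq Ts$ because every $k\in K$ lies in $T$. Hence $Ts$ is thick.

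By the syndetic/thick duality, $N_T(x,U)\cap Ts\neq\emptyset$. Choose $r\in T$ with $rs\in N_T(x,U)$; this means exactly that $rsx\in U$, so $U\cap Tsx\neq\emptyset$. As $U$ was an arbitrary neighborhood of $x$, we conclude $x\in\textrm{cls}_X Tsx$, and as $s\in T$ was arbitrary, $x$ is pre-recurrent.

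There is essentially no obstacle here; the proof reduces to checking that $Ts$ is thick, which is immediate from $e\in T$ and the definition. The content of the lemma is therefore entirely packaged in the syndetic/thick duality that is quoted (and referenced to~\cite{AD}) in Definition~\ref{def3.4}.
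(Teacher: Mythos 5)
Your proof is correct and is essentially the paper's argument in light disguise: the paper applies the definition of syndeticity of $N_T(x,U)$ directly at the point $s$ (some $k$ in the compact witness set satisfies $ks\in N_T(x,U)$), which is exactly what you recover by observing that $Ts$ is thick (via $Ks\subseteq Ts$) and then invoking the syndetic/thick duality. The detour through the duality is harmless but unnecessary; both routes produce an element of $Ts\cap N_T(x,U)$ and hence a point of $Tsx$ in $U$.
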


\begin{proof}
Let $x$ be an almost periodic point of $(T,X)$ and $U$ an arbitrary neighborhood of $x$; let $s\in T$. Since $N_T(x,U)$ is syndetic, there is some $k\in T$ with $ks\in N_T(x,U)$ so $U\cap Tsx\not=\emptyset$. Since $U$ is arbitrary, this implies $x\in\textrm{cls}_XTsx$.
\end{proof}

Next, based on pre-recurrent point we can easily obtain the following two simple criteria for TT of semiflows.

\begin{prop}\label{prop3.6}
If there is an $x\in\mathrm{Tran}\,(T,X)$ such that $x$ is pre-recurrent, then $(T,X)$ is TT.
\end{prop}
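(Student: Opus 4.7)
The plan is to verify TT directly from Definition~\ref{def1.1}. Fix non-empty open sets $U,V\subseteq X$. Since $x\in\mathrm{Tran}\,(T,X)$, density of $Tx$ produces some $s\in T$ with $sx\in U$. The task then reduces to finding $t\in T$ with $t(sx)\in V$, for such $t$ witnesses $t(sx)\in tU\cap V$ and hence $N_T(U,V)\neq\emptyset$. In other words, it suffices to show that the orbit $T(sx)$ is dense in $X$, i.e., that $sx$ is itself a transitive point.

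The key step I would isolate as a small lemma is: \emph{if $x$ is transitive and pre-recurrent, then every point $sx$ of its orbit is also transitive}. To prove this, let $s\in T$ be arbitrary and let $y\in X$. By density of $Tx$, pick a net $(r_\beta)$ in $T$ with $r_\beta x\to y$. By pre-recurrence applied to this particular $s$, there is a net $(t_\alpha)$ in $T$ with $t_\alpha sx\to x$. For each fixed $\beta$, joint continuity of the action yields $r_\beta(t_\alpha sx)\to r_\beta x$, while associativity of the semiflow and closure of $T$ under its semigroup operation give
\begin{equation*}
r_\beta(t_\alpha sx)=(r_\beta t_\alpha)(sx)\in T(sx).
\end{equation*}
Hence $r_\beta x\in\mathrm{cls}_X T(sx)$ for every $\beta$, and since $r_\beta x\to y$ and $\mathrm{cls}_X T(sx)$ is closed, $y\in\mathrm{cls}_X T(sx)$. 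As $y$ was arbitrary, $T(sx)$ is dense, so $sx\in\mathrm{Tran}\,(T,X)$.

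Applying the lemma to the $s$ chosen at the outset, $sx$ is a transitive point, so density of $T(sx)$ provides $t\in T$ with $t(sx)\in V$; combined with $sx\in U$ this gives $t(sx)\in tU\cap V$, so $N_T(U,V)\neq\emptyset$, completing the proof that $(T,X)$ is TT.

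The one place requiring care — the ``main obstacle'' in an otherwise straightforward argument — is the interchange inside the lemma: we must realize $r_\beta(t_\alpha sx)$ genuinely as a member of the \emph{same} orbit $T(sx)$, not of some enlargement like $TT(sx)$. This is where the semigroup hypothesis $r_\beta t_\alpha\in T$ and the associativity axiom (3) of a semiflow are both essential; note also that the neutral element $e$ is not needed here, in contrast to the proof of Theorem~\ref{thm2.1}. Everything else reduces to routine net manipulation using joint continuity and the closedness of closures.
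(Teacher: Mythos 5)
Your proposal is correct and follows essentially the same route as the paper: the core step in both is that pre-recurrence ($x\in\mathrm{cls}_XTsx$) together with $\mathrm{cls}_XTx=X$ forces $\mathrm{cls}_XT(sx)=X$, so every point of the orbit $Tx$ is transitive and $\mathrm{Tran}\,(T,X)$ is dense, whence TT. Your net computation merely unpacks the paper's one-line assertion that $x\in\mathrm{cls}_XTsx$ implies $X=\mathrm{cls}_XTsx$, and your final step (choosing $s$ with $sx\in U$ and then $t$ with $tsx\in V$) is the standard deduction of TT from density of transitive points.
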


\begin{proof}
Let $x\in\mathrm{Tran}\,(T,X)$ be a pre-recurrent; then for all $s\in T$, $x\in\textrm{cls}_XTsx$ implies that $X=\textrm{cls}_XTsx$ and so $sx\in\mathrm{Trans}\,(T,X)$. Thus $\mathrm{Tran}\,(T,X)$ is dense in $X$ and so $(T,X)$ is TT.
\end{proof}

\begin{prop}\label{prop3.7}
Suppose $(T,X)$ is PT. Then $(T,X)$ is TT if and only if whenever (even for some) $x\in\mathrm{Tran}\,(T,X)$ and for all non-empty open sets $U,V$ in $X$, there are $s,t$ in $T$ such that $sx\in U$ and $tsx\in V$.
\end{prop}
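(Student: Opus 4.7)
The statement is a clean equivalence whose reverse direction is essentially a tautology, while the forward direction uses density of the orbit $Tx$ to ``upgrade'' the witness provided by TT. I would argue as follows.

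For the reverse direction, assume there exists $x\in\mathrm{Tran}\,(T,X)$ such that for every pair of non-empty open $U,V\subseteq X$ one can find $s,t\in T$ with $sx\in U$ and $tsx\in V$. Given any such $U,V$, pick these $s,t$; then $sx\in U$ and $sx\in t^{-1}V$, so $U\cap t^{-1}V\neq\emptyset$, i.e.\ $t\in N_T(U,V)$. Thus $(T,X)$ is TT.

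For the forward direction, assume $(T,X)$ is TT and let $x\in\mathrm{Tran}\,(T,X)$ be arbitrary. Given non-empty open $U,V\subseteq X$, TT furnishes some $t\in T$ with $tU\cap V\neq\emptyset$, equivalently $U\cap t^{-1}V\neq\emptyset$. Since $t\colon X\to X$ is continuous, $t^{-1}V$ is open, so $W:=U\cap t^{-1}V$ is a non-empty open subset of $X$. Because $Tx$ is dense in $X$, there exists $s\in T$ with $sx\in W$; that is, $sx\in U$ and $tsx\in V$, as required. Note that the same $s$ and $t$ work for every transitive point chosen at the start, which accounts for the parenthetical ``even for some'' in the statement: the condition holding for one transitive point already suffices for the converse, while under TT it in fact holds for \emph{every} transitive point.

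There is no real obstacle in either direction; the only substantive point is to recognize that TT yields a non-empty open set $U\cap t^{-1}V$ and to then invoke density of $Tx$ to place the witness on the orbit of the prescribed transitive point $x$. The use of joint continuity (and hence continuity of each $t$) is implicit in interpreting $t^{-1}V$ as open.
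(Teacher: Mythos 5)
Your argument is correct and is essentially the paper's own proof: the paper phrases TT as producing some $t\in T$ and a non-empty open $W\subseteq U$ with $tW\subseteq V$, which is exactly your $W=U\cap t^{-1}V$, and then invokes density of $Tx$ to find $s$ with $sx\in W$. One cosmetic quibble: for fixed $U,V$ the element $t$ is indeed independent of the transitive point, but $s$ is chosen so that $sx\in W$ and hence does depend on $x$, so your closing remark that ``the same $s$ and $t$ work for every transitive point'' overstates matters slightly --- though this does not affect the proof, since the proposition only asserts existence of some $s,t$ for each $x$.
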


\begin{proof}
The sufficiency is obvious. Now for the necessity, let $x$ be a transitive point and $U,V$ non-empty open sets. By TT, there is some $t\in T$ and a non-empty open set $W\subseteq U$ with $tW\subseteq V$. Finally by PT, there is an $s\in T$ such that $sx\in W$ so $tsx\in V$.
\end{proof}

In view of Example~\ref{exa1.3}, the pre-recurrence in Proposition~\ref{prop3.6} is important. More general than Proposition~\ref{prop3.6}, we can obtain the following result.

\begin{thm}\label{thm3.8}
Let $(T,X)$ be PT such that there exists an $x\in\mathrm{Tran}\,(T,X)$ which is a limit of pre-recurrent points. Then $(T,X)$ is TT.
\end{thm}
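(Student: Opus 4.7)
The plan is to verify topological transitivity directly from the definition: given arbitrary non-empty open sets $U, V \subseteq X$, I need to produce some $t \in T$ with $tU \cap V \neq \emptyset$.

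First, since $x \in \mathrm{Tran}\,(T,X)$, the orbit $Tx$ is dense in $X$, so I can pick $s_0, s_1 \in T$ with $s_0 x \in U$ and $s_1 x \in V$. By joint continuity, the translations $y \mapsto s_0 y$ and $y \mapsto s_1 y$ are continuous, and hence $s_0^{-1}(U)$ and $s_1^{-1}(V)$ are open neighborhoods of $x$.

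Next I would invoke the hypothesis that $x$ is a limit of pre-recurrent points: there exists a pre-recurrent point $x'$ lying in $s_0^{-1}(U) \cap s_1^{-1}(V)$, so $s_0 x' \in U$ and $s_1 x' \in V$. The crucial step is to transfer the transitivity-style information from $x$ to the approximant $x'$ via pre-recurrence. Applying the definition at the element $s_0 \in T$, we have $x' \in \textrm{cls}_X T s_0 x'$; then continuity of each translation together with the inclusion $tT \subseteq T$ gives $Tx' \subseteq \textrm{cls}_X T s_0 x'$. In particular $s_1 x' \in \textrm{cls}_X T s_0 x'$, and since $V$ is an open neighborhood of $s_1 x'$, this forces $V \cap T s_0 x' \neq \emptyset$. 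Picking $t \in T$ with $t(s_0 x') \in V$ and combining with $s_0 x' \in U$, we obtain $t(s_0 x') \in tU \cap V$, so $N_T(U,V) \ni t$ and $(T,X)$ is TT.

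The main conceptual point—and what distinguishes Theorem~\ref{thm3.8} from Proposition~\ref{prop3.6}—is that $x$ itself need not be pre-recurrent (Example~\ref{exa1.3} shows this can fail), so pre-recurrence must be applied at the approximant $x'$ rather than at $x$. The transitivity of $x$ merely guarantees that two arbitrary open sets $U$ and $V$ can be simultaneously hit by the translates $s_0 x'$ and $s_1 x'$ of a single pre-recurrent point $x'$ close to $x$, and pre-recurrence of $x'$ then provides the required $t$ linking $U$ to $V$ along the orbit of $s_0 x'$.
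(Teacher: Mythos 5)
Your proof is correct and follows essentially the same strategy as the paper's: use density of $Tx$ and joint continuity to locate a pre-recurrent point $x'$ in a suitable neighborhood of $x$ whose translates land in $U$ and $V$, then apply pre-recurrence of $x'$ at the element carrying it into $U$ to continue the orbit into $V$. The only cosmetic difference is that you exploit the inclusion $Tx'\subseteq\textrm{cls}_X\,Ts_0x'$ directly (reaching $V$ via $s_1x'$), whereas the paper first returns the orbit of $tz$ to a neighborhood $W$ of $x$ with $sW\subseteq V$ and then applies $s$; both realizations of the idea are sound.
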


\begin{proof}
Let $x\in\mathrm{Tran}\,(T,X)$ which is a limit of pre-recurrent points, and let $U, V$ be non-empty open subsets of $X$. Since $x$ is a transitive point, there exist a neighborhood $W$ of $x$ and $s,t\in T$ such that $sW\subseteq V$ and $tx\in U$. Hence there exists a pre-recurrent point $z\in W$ such that $tz\in U$. This implies that there is some $\tau\in T$ such that $\tau(tz)\in W$ so that $s\tau U$ intersects $V$ non-voidly. Thus $(T,X)$ is TT by Definition~\ref{def1.1}. This proves Theorem~\ref{thm3.8}.
\end{proof}

\begin{cor}\label{cor3.9}
Let $(T,X)$ be a PT semiflow. If minimal points or almost periodic points are dense in $X$, then $(T,X)$ is TT.
\end{cor}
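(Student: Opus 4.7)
The plan is to reduce Corollary~\ref{cor3.9} directly to Theorem~\ref{thm3.8} by observing that, under either hypothesis, the set of pre-recurrent points is dense in $X$, so any transitive point (which exists by PT) is automatically a limit of pre-recurrent points.

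More concretely, I would proceed as follows. First, invoke the hypothesis: by assumption, either the set $M$ of minimal points or the set $AP$ of almost periodic points is dense in $X$. Second, apply Lemma~\ref{lem3.3} in the first case, or Lemma~\ref{lem3.5} in the second case, to conclude that in either scenario the set $R$ of pre-recurrent points of $(T,X)$ contains a dense subset (namely $M \subseteq R$ or $AP \subseteq R$), hence $R$ itself is dense in $X$. Third, since $(T,X)$ is PT, pick any $x \in \mathrm{Tran}\,(T,X)$; because $R$ is dense, $x$ lies in $\mathrm{cls}_X R$, i.e., $x$ is a limit of pre-recurrent points. Finally, the hypotheses of Theorem~\ref{thm3.8} are satisfied, so $(T,X)$ is TT.

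There is essentially no obstacle here: the two lemmas do all the work of translating ``minimal'' or ``almost periodic'' into ``pre-recurrent,'' and Theorem~\ref{thm3.8} is exactly designed to promote the existence of a transitive point that is a limit of pre-recurrent points into TT. The only thing worth being careful about is that one should not claim the transitive point $x$ is itself pre-recurrent (Example~\ref{exa1.3} shows this can fail in general); density of $R$ is strictly weaker than $x \in R$, and that weaker property is exactly what Theorem~\ref{thm3.8} needs.
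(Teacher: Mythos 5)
Your proposal is correct and follows exactly the paper's own argument: Lemma~\ref{lem3.3} (resp.\ Lemma~\ref{lem3.5}) makes the pre-recurrent points dense, and density makes any transitive point a limit of pre-recurrent points, so Theorem~\ref{thm3.8} applies. Your added caution that the transitive point need not itself be pre-recurrent is a sensible clarification but does not change the argument.
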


\begin{proof}
By Lemma~\ref{lem3.3} and Lemma~\ref{lem3.5}, the pre-recurrent points are dense in $X$. Then Corollary~\ref{cor3.9} follows from Theorem~\ref{thm3.8}.
\end{proof}
\section{PT and universally transitive semiflows}\label{sec4}
It has already been a well-known fact that
\begin{quote}
{\it If $f\colon Y\rightarrow Y$ is a continuous surjective transformation of a compact metric space $X$, then PT $\Rightarrow$ TT for $(f,Y)$} (cf.~\cite[Theorem~2.2.2]{KS}).
\end{quote}
In fact, we can obtain the following generalization to this result, in which the surjective condition is important according to Example~\ref{exa1.3}.

\begin{thm}\label{thm4.1}
Let $(T,X)$ be PT and surjective with $T$ an abelian semigroup. Then $\mathrm{Tran}\,(T,X)$ is invariant and thus $(T,X)$ is TT.
\end{thm}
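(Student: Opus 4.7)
The plan is to show directly that $\mathrm{Tran}\,(T,X)$ is invariant under each $s\in T$, and then deduce TT from density of $\mathrm{Tran}\,(T,X)$. Fix $x\in\mathrm{Tran}\,(T,X)$ and $s\in T$; we must show $sx\in\mathrm{Tran}\,(T,X)$, i.e., $T(sx)$ is dense in $X$.

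The key algebraic identity is that since $T$ is abelian, $T(sx)=(Ts)x=(sT)x=s(Tx)$. Now exploit continuity of the map $y\mapsto sy$: for any subset $A\subseteq X$ one has $s(\mathrm{cls}_XA)\subseteq\mathrm{cls}_Xs(A)$. Applying this with $A=Tx$ gives $s(\mathrm{cls}_XTx)\subseteq\mathrm{cls}_Xs(Tx)=\mathrm{cls}_XT(sx)$. Since $x$ is a transitive point, $\mathrm{cls}_XTx=X$, and since $(T,X)$ is surjective, $s(X)=X$; combining these yields $X=s(X)\subseteq\mathrm{cls}_XT(sx)$, so $sx\in\mathrm{Tran}\,(T,X)$. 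Thus $\mathrm{Tran}\,(T,X)$ is $T$-invariant.

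From invariance, it follows that $Tx\subseteq\mathrm{Tran}\,(T,X)$ for the given $x$, and since $Tx$ is dense in $X$, so is $\mathrm{Tran}\,(T,X)$. To finish, given any non-empty open $U,V\subseteq X$, density of transitive points supplies some $y\in U\cap\mathrm{Tran}\,(T,X)$; density of $Ty$ then provides $t\in T$ with $ty\in V$, so $y\in U\cap t^{-1}V$ and $N_T(U,V)\neq\emptyset$. Hence $(T,X)$ is TT.

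The only nontrivial point is the step that requires both hypotheses simultaneously: commutativity is what rewrites $T(sx)$ as $s(Tx)$, and surjectivity of $s$ is what converts density of $Tx$ into density of its image $s(Tx)$. Without commutativity one only knows $Tsx$, not $sTx$, and closure would be taken under the wrong map; without surjectivity $s(X)$ could be a proper closed set, as Example~\ref{exa1.3} already illustrates. So the main (minor) obstacle is simply being careful that the two hypotheses feed into distinct places in the argument.
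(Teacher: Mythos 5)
Your argument is correct and is essentially the paper's own proof: both rest on the chain $\mathrm{cls}_XT(sx)=\mathrm{cls}_Xs(Tx)\supseteq s(\mathrm{cls}_XTx)=s(X)=X$, using commutativity for the first equality, continuity for the inclusion, and surjectivity for the last step. You merely spell out the final deduction of TT from density of $\mathrm{Tran}\,(T,X)$, which the paper leaves to an earlier remark.
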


\begin{proof}
Given $x_0\in\mathrm{Trans}\,(T,X)$ and $s\in T$, since $\textrm{cls}_XTsx_0=\textrm{cls}_XsTx_0\supseteq s\textrm{cls}_XTx_0=X$, thus we have $sx_0\in\mathrm{Trans}\,(T,X)$. This proves Theorem~\ref{thm4.1}.
\end{proof}

\begin{sn}\label{def4.2}
Let $(T,X)$ be a semiflow on a topological space $X$ with phase semigroup $T$.
\begin{enumerate}
\item Let $\textrm{Aut}\,(T,X)$ be the automorphism group of $(T,X)$; i.e., $\textrm{Aut}\,(T,X)$ is the group of all self-homeomorphisms $a$ of $X$ such that $at=ta$ for all $t\in T$.

 \item If $\textrm{Aut}\,(T,X)x=X$ for some $x\in X$ (so for all $x\in X$), then $(T,X)$ is called \textit{universally transitive} (UT\index{$\textrm{UT}$}) (or algebraically transitive in \cite{G48}).
\end{enumerate}
\end{sn}

\begin{note*}
If $(T,X)$ is UT, then $\textrm{Aut}\,(T,X)$ is referred to as \textit{transitive} on $X$ (cf.~\cite[Theorem~2.13]{Aus}).
\end{note*}

Using UT condition instead of the one that each $t\in T$ is surjective, we can obtain the following corollary.

\begin{cor}\label{cor4.3}
If $(T,X)$ is PT and UT with $T$ an abelian semigroup, then $\mathrm{Tran}\,(T,X)$ is invariant and thus $(T,X)$ is TT.
\end{cor}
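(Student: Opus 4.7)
The plan is to reduce Corollary~\ref{cor4.3} to Theorem~\ref{thm4.1} by observing that the UT hypothesis already forces every $s\in T$ to act surjectively on $X$. Fix any $x_0\in X$ and any $s\in T$; applying UT at the point $sx_0$ gives $\mathrm{Aut}\,(T,X)(sx_0)=X$. Since every $a\in\mathrm{Aut}\,(T,X)$ commutes with $s$ by the very definition of the automorphism group, we have $a(sx_0)=s(ax_0)$ for all $a$, whence
\begin{equation*}
X=\mathrm{Aut}\,(T,X)(sx_0)=s\bigl(\mathrm{Aut}\,(T,X)x_0\bigr)\subseteq sX,
\end{equation*}
so $sX=X$. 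Thus $(T,X)$ is surjective in the sense of item~b) of $\S\ref{sec0}$.

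Once surjectivity is in hand, the argument of Theorem~\ref{thm4.1} applies verbatim: for any $x_0\in\mathrm{Tran}\,(T,X)$ and any $s\in T$, using first the continuity of $s$, then the abelian hypothesis, and finally surjectivity,
\begin{equation*}
\mathrm{cls}_X T(sx_0)=\mathrm{cls}_X(sTx_0)\supseteq s\,\mathrm{cls}_X Tx_0=sX=X,
\end{equation*}
so $sx_0\in\mathrm{Tran}\,(T,X)$, proving invariance. Density of $\mathrm{Tran}\,(T,X)$ is then automatic, since the orbit $Tx_0$ of any transitive point $x_0$ is dense and lies entirely inside the now-invariant set $\mathrm{Tran}\,(T,X)$. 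TT follows immediately: given non-empty open $U,V\subseteq X$, density of $\mathrm{Tran}\,(T,X)$ supplies some $x\in U\cap\mathrm{Tran}\,(T,X)$, and the dense orbit $Tx$ meets $V$, so $N_T(U,V)\ne\emptyset$.

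The only non-obvious step is the identity $\mathrm{Aut}\,(T,X)(sx_0)=s(\mathrm{Aut}\,(T,X)x_0)$, which rests on the commutation $at=ta$ built into Definition~\ref{def4.2}.1; I do not expect any serious obstacle. Note that the abelian hypothesis on $T$ itself is \emph{not} needed to promote UT to surjectivity, but it remains indispensable at the Theorem~\ref{thm4.1} step, where the swap $T(sx_0)=sTx_0$ is the crucial ingredient for transporting transitivity along the semigroup action.
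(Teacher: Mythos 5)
Your proposal is correct and follows essentially the same route as the paper: both reduce to Theorem~\ref{thm4.1} by showing that UT forces $sX=X$ for every $s\in T$, via the commutation $as=sa$ which gives $X=\mathrm{Aut}\,(T,X)(sx_0)=s(\mathrm{Aut}\,(T,X)x_0)\subseteq sX$. Your added observation that the abelian hypothesis is not needed for the surjectivity step is accurate.
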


\begin{proof}
By Theorem~\ref{thm4.1}, it is sufficient to show $tX=X$ for all $t\in T$. In fact, since $atX=taX=tX$ for all $t\in T$ and $a\in \textrm{Aut}\,(T,X)$, hence $atx\in tX$ for all $a\in\textrm{Aut}\,(T,X)$. So by UT, $tX=X$.
\end{proof}

We notice here that TT $+$ PT $\not\Rightarrow$ UT even for flows with compact metric phase spaces. Let's see such a simple example as follows.

\begin{exa}\label{exa4.4}
Let $X=\mathbb{R}\cup\{\infty\}$ be the one-point compactification of $\mathbb{R}$ with the usual topology and let $T=(\mathbb{R},+)$; define $\pi\colon T\times X\rightarrow X$, $(t,x)\mapsto t+x$, which is of course a flow. Then $(T,X)$ is TT and PT with $\mathrm{Tran}\,(T,X)=\mathbb{R}$ but $\textrm{Aut}\,(T,X)\infty=\{\infty\}\not=X$. Thus $(T,X)$ is not UT.
\end{exa}

Under UT condition, if our phase space $X$ is compact Hausdorff or compact metric, then we can gain more. First, let's recall a classical theorem of Gottschalk.
\begin{quote}
\textbf{Gottschalk's theorem} (cf.~\cite[Theorem~7]{G48}). {\it Let $(T,X)$ be a PT flow on a compact metric space with $T$ an abelian group. Then
$(T,X)$ is UT iff $(T,X)$ is equicontinuous.}
\end{quote}
Next we will generalize Gottschalk's theorem for semiflows. For this, we need to introduce some concepts and lemmas for self-closeness of this note.

\begin{sn}\label{def4.5}
Let $(T,X)$ be a semiflow on a compact Hausdorff space $X$ with the compatible symmetric uniform structure $\mathscr{U}_X$.
\begin{enumerate}
\item We say $(T,X)$ is \textit{distal} if given $x,y\in X$ with $x\not=y$, there is an $\alpha\in\mathscr{U}_X$ such that $(tx,ty)\not\in\alpha$ for all $t\in T$. Thus if $(T,X)$ is distal, then for two different initial points $x,y\in X$, their orbits $Tx$ and $Ty$ are synchronously far away.

\item $(T,X)$ is called \textit{equicontinuous} in case given $\varepsilon\in\mathscr{U}_X$, there is some $\delta\in\mathscr{U}_X$ such that if $(x,y)\in\delta$ then $(tx,ty)\in\varepsilon$ for all $t\in T$.

\item We say $x\in X$ is an \textit{equicontinuous point} of $(T,X)$, denoted $x\in\mathrm{Equi}\,(T,X)$, if given $\varepsilon\in\mathscr{U}_X$, there is $\delta\in\mathscr{U}_X$ such that $(tx,ty)\in\varepsilon\ \forall t\in T$ whenever $(x,y)\in\delta$.
\end{enumerate}
\end{sn}

\begin{lem}[{cf.~\cite[Lemma~1.6]{AD}}]
If $\mathrm{Equi}\,(T,X)=X$ with $X$ a compact Hausdorff space, then $(T,X)$ is equicontinuous.
\end{lem}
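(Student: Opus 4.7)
The plan is to upgrade pointwise equicontinuity to uniform equicontinuity by a standard compactness argument inside the uniform structure $\mathscr{U}_X$. Fix $\varepsilon \in \mathscr{U}_X$ and choose a symmetric $\varepsilon_1 \in \mathscr{U}_X$ with $\varepsilon_1 \circ \varepsilon_1 \subseteq \varepsilon$.

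For each $x \in X$, the assumption $x \in \mathrm{Equi}\,(T,X)$ supplies a symmetric $\delta_x \in \mathscr{U}_X$ such that $(x,y) \in \delta_x$ forces $(tx,ty) \in \varepsilon_1$ for all $t \in T$. Pick a further symmetric $\eta_x \in \mathscr{U}_X$ with $\eta_x \circ \eta_x \subseteq \delta_x$, and consider the open neighbourhoods $U_x := \{y : (x,y) \in \eta_x\}$. The family $\{U_x\}_{x\in X}$ is an open cover of the compact space $X$, so it admits a finite subcover $U_{x_1}, \ldots, U_{x_n}$. Set
\begin{equation*}
\delta \;=\; \bigcap_{i=1}^n \eta_{x_i} \;\in\; \mathscr{U}_X.
\end{equation*}

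Now I would verify that this $\delta$ witnesses uniform equicontinuity for $\varepsilon$. Given any $(y,z) \in \delta$, there is some $i$ with $y \in U_{x_i}$, i.e.\ $(x_i, y) \in \eta_{x_i}$; combining with $(y,z) \in \delta \subseteq \eta_{x_i}$ and symmetry gives $(x_i, z) \in \eta_{x_i} \circ \eta_{x_i} \subseteq \delta_{x_i}$ and $(x_i, y) \in \delta_{x_i}$. Applying pointwise equicontinuity at $x_i$ to both pairs yields $(tx_i, ty) \in \varepsilon_1$ and $(tx_i, tz) \in \varepsilon_1$ for every $t \in T$. Symmetry of $\varepsilon_1$ then gives $(ty, tz) \in \varepsilon_1 \circ \varepsilon_1 \subseteq \varepsilon$ for all $t \in T$, which is exactly the definition of equicontinuity of $(T,X)$.

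There is no real obstacle here beyond bookkeeping with entourages; the only thing to be a little careful about is choosing the nested entourages $\varepsilon_1$, $\delta_x$, $\eta_x$ symmetrically so that the concatenation argument lands inside the prescribed $\varepsilon$, and exploiting compactness of $X$ (rather than of $T$, which is not assumed) to pass from the pointwise family $\{\delta_x\}$ to the single global $\delta$.
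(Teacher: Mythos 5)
Your proof is correct and is the standard entourage--compactness argument; the paper itself gives no proof of this lemma, merely citing \cite[Lemma~1.6]{AD}, where essentially the same upgrade from pointwise to uniform equicontinuity is carried out. One cosmetic point: $\eta_x[x]$ need not be open, so either choose each $\eta_x$ to be an open entourage or replace $U_x$ by its interior (which still contains $x$, since $\eta_x[x]$ is a neighbourhood of $x$) before invoking compactness.
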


\begin{lem}[{cf.~\cite[Lemma~3.3]{KM} and \cite[Lemma~4.1]{DX}}]\label{lem4.7A}
If $(T,X)$ is a TT semiflow with $X$ a compact Hausdorff space, then $\mathrm{Equi}\,(T,X)\subseteq\mathrm{Tran}\,(T,X)$.
\end{lem}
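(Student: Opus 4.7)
The plan is to show directly that every equicontinuous point $x$ is a transitive point, by approximating the orbit of a nearby point (produced by TT) using the uniform control of the semiflow at $x$.

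Let $x\in\mathrm{Equi}\,(T,X)$ and let $V\subseteq X$ be an arbitrary non-empty open set; it suffices to produce $t\in T$ with $tx\in V$. First I would use that a compact Hausdorff space is regular to pick a non-empty open $V'$ with $\mathrm{cls}_XV'\subseteq V$. Since the compact set $\mathrm{cls}_XV'$ lies inside the open set $V$, a standard property of the compatible uniformity $\mathscr{U}_X$ on the compact Hausdorff space $X$ yields an entourage $\varepsilon\in\mathscr{U}_X$ with
\begin{equation*}
\varepsilon[\mathrm{cls}_XV']:=\{z\in X\,|\,\exists\,w\in\mathrm{cls}_XV'\text{ with }(z,w)\in\varepsilon\}\subseteq V.
\end{equation*}

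Next, by equicontinuity of $(T,X)$ at $x$ applied to this $\varepsilon$, I obtain $\delta\in\mathscr{U}_X$ such that the open neighborhood $U:=\delta[x]$ of $x$ satisfies $(tx,ty)\in\varepsilon$ for every $y\in U$ and every $t\in T$. Now TT, applied to the two non-empty open sets $U$ and $V'$, furnishes some $t\in T$ and $y\in U$ with $ty\in V'$. Combining these, $(tx,ty)\in\varepsilon$ together with $ty\in\mathrm{cls}_XV'$ gives $tx\in\varepsilon[\mathrm{cls}_XV']\subseteq V$. Hence $Tx\cap V\neq\emptyset$, and since $V$ was arbitrary, $Tx$ is dense in $X$; that is, $x\in\mathrm{Tran}\,(T,X)$.

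I do not expect any substantive obstacle. The only technical point worth verifying carefully is the uniform-structure step producing $\varepsilon$ with $\varepsilon[\mathrm{cls}_XV']\subseteq V$; this is standard for compact sets inside open sets in a uniform space, and uses the symmetry of $\mathscr{U}_X$ already postulated in Definition~\ref{def4.5}. All other steps (regularity of compact Hausdorff spaces, the definition of equicontinuity at $x$, and a single application of TT) are direct unpackings of the hypotheses.
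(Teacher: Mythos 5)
Your proof is correct. The paper does not actually prove this lemma---it is quoted from \cite{KM} and \cite{DX}---and your argument is the standard one: shrink $V$ to a non-empty open $V'$ with $\mathrm{cls}_XV'\subseteq V$, pick $\varepsilon\in\mathscr{U}_X$ with $\varepsilon[\mathrm{cls}_XV']\subseteq V$, use equicontinuity at $x$ to get $\delta$, and apply TT to $\delta[x]$ and $V'$. The only point to tidy is that $\delta[x]$ need not be open for an arbitrary entourage $\delta$; it is a neighborhood of $x$, so apply TT to $\mathrm{Int}_X(\delta[x])$ (or choose $\delta$ from the base of open entourages), after which the rest goes through unchanged.
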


It is a well-known basic fact that
\begin{quote}
{\it An equicontinuous flow is minimal if and only if it is PT} (cf., e.g.~\cite[p.~37]{Aus}).
\end{quote}
But this is not the case in semiflow situation. Let's see a simple example.

\begin{exa}\label{exa4.8}
Let $X=\{a,b\}$ and $f\colon a\mapsto b\mapsto b$; then the cascade $(f,X)$, which induces a $\mathbb{Z}_+$-action, is equicontinuous and PT with $\mathrm{Tran}\,(f,X)=\{a\}$, but it is not minimal.
\end{exa}

However if we consider TT instead of PT, then by Lemma~\ref{lem4.7A} we can obtain the following.

\begin{lem}\label{lem4.9}
Let $(T,X)$ be an equicontinuous semiflow on a compact Hausdorff space; then $(T,X)$ is TT if and only if it is minimal.
\end{lem}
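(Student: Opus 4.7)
The plan is to split the biconditional into two implications and observe that one direction is a triviality once we unpack minimality, while the other is an essentially immediate consequence of Lemma~\ref{lem4.7A}.

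For the implication \emph{minimal $\Rightarrow$ TT}, I would argue directly from the definitions: given non-empty open sets $U,V\subseteq X$, pick any $x\in U$; since $(T,X)$ is minimal, $\textrm{cls}_XTx=X$, so the open set $V$ meets $Tx$, which means $N_T(U,V)\neq\emptyset$. No use of equicontinuity is needed here, and the reasoning works verbatim in the semiflow setting because $e\in T$ forces $x\in Tx$.

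For the implication \emph{TT $\Rightarrow$ minimal}, the strategy is to leverage equicontinuity to upgrade ``there exists a transitive point'' to ``every point is transitive''. By Lemma~\ref{lem4.7A} applied to the TT semiflow $(T,X)$, we have $\mathrm{Equi}\,(T,X)\subseteq \mathrm{Tran}\,(T,X)$. Equicontinuity of $(T,X)$ means, straight from Definition~\ref{def4.5}.2--3, that every point of $X$ is an equicontinuous point; thus $\mathrm{Equi}\,(T,X)=X$, and combining with the previous inclusion gives $\mathrm{Tran}\,(T,X)=X$. Hence $\textrm{cls}_XTx=X$ for every $x\in X$, which is exactly minimality.

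There is no real obstacle, since Lemma~\ref{lem4.7A} is doing the work. The only point worth flagging is why the argument does not contradict Example~\ref{exa4.8}: in that example $(f,X)$ is PT but not TT (the point $b$ is fixed, so the open set $\{b\}$ is $T^{-1}$-invariant but not dense), so Lemma~\ref{lem4.7A} does not apply, and the transitive point $a$ cannot be promoted to a dense set of transitive points. This underlines that the strengthening from PT to TT in the hypothesis is precisely what is needed to pass, via the equicontinuity hypothesis, from ``some orbit is dense'' to ``every orbit is dense.''
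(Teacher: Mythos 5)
Your proof is correct and follows essentially the same route as the paper: minimality gives TT directly from the definitions, and the converse is exactly the paper's one-line application of Lemma~\ref{lem4.7A} together with $\mathrm{Equi}\,(T,X)=X$. One small slip in your side remark about Example~\ref{exa4.8}: the set $\{b\}$ is forward-invariant (so non-TT follows from Theorem~\ref{thm2.1}(2)), not $T^{-1}$-invariant, since $f^{-1}(\{b\})=\{a,b\}$; the non-dense open $T^{-1}$-invariant set there is $\{a\}$ — but this does not affect your argument for the lemma itself.
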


\begin{proof}
If $(T,X)$ is minimal, then it is obviously TT. Conversely, if it is TT, then by Lemma~\ref{lem4.7A}, $X=\mathrm{Equi}\,(T,X)\subseteq\mathrm{Tran}\,(T,X)$ so $\textrm{cls}_XTx=X$ for all $x\in X$ and thus $(T,X)$ is minimal.
\end{proof}

Recall that if $\mathrm{Equi}\,(T,X)$ is dense in $X$, then $(T,X)$ is called \textit{almost equicontinuous} (cf.~\cite{AAB, G03}).
It should be noticed that if we relax ``equicontinuous'' by ``almost equicontinuous'', then the above statement is false even for flows as will be shown by the following example.

\begin{exa}\label{exa4.10}
We now construct a non-minimal cascade $(f,X)$ with $\mathrm{Equi}\,(f,X)=\mathrm{Tran}\,(f,X)$ dense. Let $X$ be the compact metric space with
\begin{gather*}
X=\{0,1\}\cup\left\{2^{-2^n}\,|\,n=0,1,2,\dotsc\right\}\cup\left\{2^{-1/2^n}\,|\,n=1,2,\dotsc\right\}
\end{gather*}
and $f\colon x\mapsto x^2$.
Clearly, $(f,X)$ is TT and PT such that $\mathrm{Equi}\,(f,X)=\mathrm{Tran}\,(f,X)=X\setminus\{0,1\}$ is dense in $X$. But $(f,X)$ is not minimal as a flow.
\end{exa}

Of course, if we relax ``equicontinuous'' by ``almost equicontinuous'' and meanwhile we strengthen ``TT'' by ``ST'' (cf.~Definition~\ref{def6.1}), then the statement of Lemma~\ref{lem4.9} still holds by Theorem~\ref{thm6.12} in $\S\ref{sec6}$.

\begin{lem}[{cf.~\cite[Corollary~3.3]{AD}}]\label{lem4.11}
If $(T,X)$ is a minimal semiflow on a compact Hausdorff space $X$ with $T$ abelian, then $(T,X)$ is surjective.
\end{lem}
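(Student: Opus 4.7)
The plan is to fix an arbitrary $t\in T$ and show that $tX$ is a non-empty closed invariant subset of $X$, after which minimality forces $tX=X$.

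First I would observe that $tX$ is non-empty and is the continuous image of the compact space $X$ under the continuous map $x\mapsto tx$; in the Hausdorff space $X$, this makes $tX$ compact and therefore closed. The one nontrivial step is verifying $T$-invariance of $tX$, and this is precisely where the abelian hypothesis is used: for any $s\in T$ and any $tx\in tX$,
\begin{equation*}
s(tx)=(st)x=(ts)x=t(sx)\in tX,
\end{equation*}
since $sx\in X$. Thus $T(tX)\subseteq tX$.

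Since $(T,X)$ is minimal on a compact Hausdorff space, every non-empty closed invariant subset of $X$ coincides with $X$ (indeed, picking any $y\in tX$ we have $\textrm{cls}_X Ty\subseteq tX$, and by minimality $\textrm{cls}_X Ty=X$). Therefore $tX=X$, and as $t\in T$ was arbitrary, $(T,X)$ is surjective.

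The only real obstacle is the invariance step, which is genuinely a commutativity argument: for non-abelian $T$ the identity $st=ts$ fails, and one cannot conclude $s(tX)\subseteq tX$ from $sX\subseteq X$. Everything else is a direct appeal to compactness, continuity, and the defining property of minimality.
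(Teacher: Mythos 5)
Your proof is correct and follows the standard route: the paper itself does not reproduce a proof of this lemma but merely cites \cite[Corollary~3.3]{AD}, and your argument --- that $tX$ is non-empty, compact (hence closed in the Hausdorff space $X$) as the continuous image of a compact space, and $T$-invariant precisely because $s(tx)=t(sx)$ by commutativity, so that minimality forces $tX=X$ --- is exactly the expected one. No gaps; the identification of the abelian hypothesis as the crux of the invariance step is also accurate.
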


\begin{lem}[{cf.~\cite[Theorem~1.13]{AD}}]\label{lem4.12}
Let $(T,X)$ be a semiflow on a compact Hausdorff space $X$ with phase semigroup $T$. Then:
\begin{enumerate}
\item[$(1)$] If $(T,X)$ is distal, then it is invertible.
\item[$(2)$] If $(T,X)$ is equicontinuous surjective, then it is distal.
\item[$(3)$] If $(T,X)$ is invertible equicontinuous, then $(\langle T\rangle, X)$ is an equicontinuous flow.
\end{enumerate}
\end{lem}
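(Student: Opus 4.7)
The proof naturally decomposes along the three claims, and the unifying tool is the Ellis enveloping semigroup $E:=E(T,X)=\overline{T}\subseteq X^X$ taken in the product topology. Under the semiflow structure $E$ is a compact right-topological semigroup containing the identity map of $X$, and the content of each part is to identify the additional structure on $E$ forced by the hypothesis.

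For (1), the injectivity of each $t\in T$ is immediate from distality: if $tx=ty$ with $x\neq y$, pick $\alpha\in\mathscr{U}_X$ separating the orbits $Tx$ and $Ty$ and evaluate at $t'=t$ to obtain $(tx,ty)\in\Delta\subseteq\alpha$, a contradiction. Surjectivity is the substantive step and rests on the classical theorem that a semiflow on a compact Hausdorff space is distal iff $E$ is a group of bijections of $X$. Granted this, for $t\in T\subseteq E$ one has $t^{-1}\in E$ with $t\circ t^{-1}=e$ as self-maps of $X$, so $t$ is a continuous bijection on a compact Hausdorff space, hence a homeomorphism.

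For (2), equicontinuity forces $E$ to lie in $C(X,X)$ and promotes pointwise convergence on $E$ to uniform convergence, upgrading $E$ to a compact \emph{topological} semigroup. I would then show that every $p\in E$ is surjective: for $z\in X$ pick a net $t_\alpha\in T$ with $t_\alpha\to p$ and, using surjectivity of each $t_\alpha$, choose $u_\alpha$ with $t_\alpha u_\alpha=z$; any subnet limit $u_\alpha\to u$ together with equicontinuity yields $t_\alpha u_\alpha\to pu$, hence $pu=z$. Any idempotent $u\in E$ then satisfies $u\circ u=u$ and $u(X)=X$, forcing $u=\mathrm{id}$; so the compact semigroup $E$ has the identity as its unique idempotent, and this makes $E$ a group (extract an idempotent in the closure of $\{p^n\}$, which must be $\mathrm{id}$, and recover an inverse of $p$ as a limit of $p^{n_\alpha-1}$). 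Finally, if $(T,X)$ failed to be distal, one could take $x\neq y$ and $t_\alpha\in T$ with $(t_\alpha x,t_\alpha y)\to\Delta$; passing to $t_\alpha\to p\in E$ gives $px=py$ with $x\neq y$, contradicting that $p$ is a bijection.

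For (3), the argument of (2) under invertibility and equicontinuity already exhibits $E$ as a compact equicontinuous group of homeomorphisms of $X$ in the uniform topology. Since $T\subseteq E$ and $E$ is a group, the smallest group of self-homeomorphisms of $X$ containing $T$ satisfies $\langle T\rangle\subseteq E$; equipping $\langle T\rangle$ with the topology inherited from $E$ makes it a topological group acting on $X$ jointly continuously and equicontinuously, as required. The principal obstacle throughout is the semigroup-theoretic identification of $E$ as a group: in (1) via the Ellis distality theorem and in (2) via the surjectivity-plus-unique-idempotent argument above; once that lemma is in hand, the three statements follow in the order written.
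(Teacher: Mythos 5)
The paper does not actually prove this lemma---it is quoted from \cite[Theorem~1.13]{AD}---so there is no internal proof to compare against; judged on its own, your enveloping-semigroup argument is correct and is the standard route (and, as far as I can tell, the one taken in the cited source). The only step you black-box, the semigroup form of Ellis's theorem that distal implies $E(T,X)$ is a group of bijections, is in any case supplied by the machinery you set up for part (2): distality in the paper's uniform sense makes every $p\in E$ injective by a net argument, an injective idempotent is the identity, and a compact right-topological semigroup whose unique idempotent is the identity (necessarily lying in a minimal left ideal) is a group.
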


\begin{lem}[{\cite{GH,Aus}}]\label{lem4.13}
Let $\langle\varphi_n\colon X\rightarrow Y\rangle_{n=1}^\infty$ be a sequence of continuous functions on a Baire space $X$ to a metric space $Y$, which converges pointwise to a function $\varphi\colon X\rightarrow Y$. Let $E$ be the set of all $x\in X$ such that $\varphi_n\to\varphi$ uniformly at $x$. Then $E$ is a residual subset of $X$.
\end{lem}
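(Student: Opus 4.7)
My plan is to carry out the classical Baire-category argument that realizes $E$ as a dense $G_\delta$ subset of $X$. The heart of the matter is to rewrite the condition ``$\varphi_n\to\varphi$ uniformly at $x$'' in a way that exhibits $E$ as a countable intersection of open sets, and then to use the Baire property to show each of these open sets is dense.

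First I would fix a compatible metric $d$ on $Y$ and, for every $\varepsilon>0$, define
\begin{equation*}
A_\varepsilon=\bigl\{x\in X\,|\,\exists\,\text{open }V\ni x,\ \exists N\in\mathbb{N},\ \forall n,m\geq N,\ \forall y\in V,\ d(\varphi_n(y),\varphi_m(y))\leq\varepsilon\bigr\}.
\end{equation*}
By construction, $A_\varepsilon$ is open in $X$. The first sub-claim is that $E=\bigcap_{k=1}^\infty A_{1/k}$. For the inclusion $E\subseteq\bigcap_k A_{1/k}$, if $\varphi_n\to\varphi$ uniformly at $x$ one obtains $V$ and $N$ making $d(\varphi_n(y),\varphi(y))\leq\varepsilon/2$ for $y\in V$ and $n\geq N$, and the triangle inequality gives $x\in A_\varepsilon$. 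For the reverse inclusion, given $x\in\bigcap_k A_{1/k}$ and $\varepsilon>0$, choose $V$ and $N$ as in the defining property of $A_{\varepsilon/2}$, and let $m\to\infty$ in $d(\varphi_n(y),\varphi_m(y))\leq\varepsilon/2$ using pointwise convergence, to conclude that $\varphi_n\to\varphi$ uniformly on $V$.

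The next step is to show that each $A_\varepsilon$ is dense in $X$. Let $U\subseteq X$ be any non-empty open set and define, for each $N\geq 1$,
\begin{equation*}
F_N=\bigl\{x\in U\,|\,d(\varphi_n(x),\varphi_m(x))\leq\varepsilon/2\ \forall n,m\geq N\bigr\}.
\end{equation*}
Since each $\varphi_n$ is continuous and $d$ is continuous on $Y\times Y$, every $F_N$ is closed in $U$; and since $\{\varphi_n(x)\}$ is Cauchy (in fact convergent) at every point, $U=\bigcup_N F_N$. Because $U$ is an open subspace of the Baire space $X$, it is itself of second Baire category in itself, so some $F_{N_0}$ has non-empty interior $V\subseteq U$. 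On $V$ the pairs $(\varphi_n,\varphi_m)$ for $n,m\geq N_0$ are within $\varepsilon/2<\varepsilon$, so $V\subseteq A_\varepsilon$, proving $A_\varepsilon\cap U\neq\emptyset$.

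Combining the two steps, $E=\bigcap_{k\geq 1}A_{1/k}$ is a countable intersection of dense open subsets of $X$, i.e.\ a dense $G_\delta$ and in particular a residual set in the Baire space $X$. There is no real obstacle here; the only delicate point is keeping the roles of $\varphi_n$--$\varphi_m$ (which are continuous, hence give closed level sets) separate from those of $\varphi_n$--$\varphi$ (where $\varphi$ need not be continuous), and passing between the two via pointwise convergence together with the $\varepsilon/2$--$\varepsilon$ trick.
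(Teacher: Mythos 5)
Your argument is correct and is exactly the classical Baire-category proof of this lemma: the paper itself states it without proof, citing Gottschalk--Hedlund and Auslander, and the proof given in those references proceeds by the same device of writing $E$ as a countable intersection of the open sets where the sequence is locally uniformly Cauchy within $1/k$ and proving their density via a closed-cover/interior argument on each non-empty open set. Your handling of the $\varphi_n$--$\varphi_m$ versus $\varphi_n$--$\varphi$ distinction and the passage to the limit in $m$ is exactly the right care to take.
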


Theorem~\ref{thm4.14} below gives us a sufficient and necessary condition for equicontinuity of TT semiflow with abelian phase semigroup in terms of UT. However, it should be mentioned that (1) of Theorem~\ref{thm4.14} in the important special case that $(T,X)$ is a \textit{flow} is originally due to Fort 1949~\cite{For} (also cf.~\cite[Theorem~9.36]{GH} and \cite[Theorem~2.13]{Aus}).

\begin{thm}\label{thm4.14}
Let $(T,X)$ be a semiflow on a compact metric space $X$ with phase semigroup $T$. Then the following two statements hold.
\begin{enumerate}
\item[$(1)$] If $(T,X)$ is UT, then $(T,X)$ is equicontinuous invertible.
\item[$(2)$] If $(T,X)$ is TT equicontinuous with $T$ abelian, then $(T,X)$ is UT.
\end{enumerate}
\end{thm}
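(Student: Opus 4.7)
The theorem splits into two essentially independent directions. Direction (2) is a bookkeeping exercise chaining together the already-cited lemmas plus the classical Ellis-group description of equicontinuous minimal abelian flows. Direction (1) is substantive: I must squeeze equicontinuity \emph{and} invertibility out of the single hypothesis UT, and the key tool will be a Baire-category (Effros-type) openness argument for the transitive $\mathrm{Aut}(T,X)$-action on the compact metric space $X$.

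\textbf{Plan for (2).} Apply the cited lemmas in order. TT plus equicontinuity gives minimality by Lemma~\ref{lem4.9}; minimality with abelian $T$ gives surjectivity by Lemma~\ref{lem4.11}; surjectivity plus equicontinuity gives distality (Lemma~\ref{lem4.12}(2)) and then invertibility (Lemma~\ref{lem4.12}(1)); invertibility plus equicontinuity upgrades the semiflow to an equicontinuous flow $(\langle T\rangle,X)$ via Lemma~\ref{lem4.12}(3), and $\langle T\rangle$ remains abelian (commuting invertible generators) and minimal. For an equicontinuous minimal flow of an abelian group on a compact metric space, the Ellis (enveloping) semigroup $E(\langle T\rangle,X)$ is classically a compact topological group of self-homeomorphisms acting transitively on $X$ and commuting with $\langle T\rangle$. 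Since $\mathrm{Aut}(T,X)=\mathrm{Aut}(\langle T\rangle,X)$, one has $E(\langle T\rangle,X)\subseteq\mathrm{Aut}(T,X)$, so $\mathrm{Aut}(T,X)$ already acts transitively on $X$, which is UT.

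\textbf{Plan for (1).} Assume UT. First, $tX$ is $\mathrm{Aut}(T,X)$-invariant (from $a(tX)=t(aX)=tX$) and non-empty, so UT forces $tX=X$; hence $(T,X)$ is surjective, and once equicontinuity is in hand, Lemma~\ref{lem4.12}(2),(1) deliver invertibility. For equicontinuity I argue by contradiction at an arbitrary $x\in X$: if $x\notin\mathrm{Equi}(T,X)$ I can find $x_n,y_n\to x$, $t_n\in T$, and $\varepsilon>0$ with $d(t_nx_n,t_ny_n)\geq\varepsilon$; by compactness pass to a subsequence with $t_nx_n\to p$, $t_ny_n\to q$, $t_nx\to r$. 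The decisive input is that $\mathrm{Aut}(T,X)$ is closed in the Polish group $\mathrm{Homeo}(X)$ (sup metric), hence itself a Polish group acting continuously and transitively on the Polish space $X$. By Effros' openness theorem (a standard Baire-category argument), the orbit map $\mathrm{Aut}(T,X)\to X$, $a\mapsto ax$, is open, and in the metric setting this yields lifts $a_n,b_n\in\mathrm{Aut}(T,X)$ with $a_nx=x_n$, $b_nx=y_n$, and $a_n,b_n\to\mathrm{id}_X$ uniformly. The commutation $t_na_n=a_nt_n$ then gives
\[
t_nx_n=a_n(t_nx)\longrightarrow\mathrm{id}_X(r)=r,\qquad t_ny_n=b_n(t_nx)\longrightarrow r,
\]
(using $\|a_n(u)-a_n(v)\|\leq 2\|a_n-\mathrm{id}\|_\infty+\|u-v\|$ for the first convergence), so $p=q=r$, contradicting $d(p,q)\geq\varepsilon$. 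Therefore $\mathrm{Equi}(T,X)=X$, and the cited criterion ``$\mathrm{Equi}(T,X)=X$ implies equicontinuous'' (the unlabelled lemma preceding Lemma~\ref{lem4.7A}) closes the argument.

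\textbf{Main obstacle.} The critical non-trivial step is the Effros openness of the orbit map $\mathrm{Aut}(T,X)\to X$: without the ability to lift $x_n\to x$ to $a_n\to\mathrm{id}$ in $\mathrm{Aut}(T,X)$, the commutation $t_na_n=a_nt_n$ cannot be converted into control of $t_nx_n$ via $t_nx$, and the equicontinuity argument collapses. Compact metrizability of $X$ (hence Polishness of $\mathrm{Homeo}(X)$ and $\mathrm{Aut}(T,X)$) is used precisely at this point. The remaining ingredients -- the $\mathrm{Aut}$-invariance of $tX$, the chain of lemmas in (2), and the classical Ellis-group structure of an equicontinuous minimal abelian flow -- are standard once the right framework is in place.
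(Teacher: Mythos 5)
Your proposal is correct, and the two halves fare differently against the paper. For (2) you follow exactly the paper's chain (Lemma~\ref{lem4.9} $\to$ Lemma~\ref{lem4.11} $\to$ Lemma~\ref{lem4.12}(2),(1),(3)), the only divergence being that where the paper simply cites Gottschalk's theorem for the resulting equicontinuous PT flow $(\langle T\rangle,X)$, you re-prove it via the enveloping semigroup; that is legitimate (and is precisely the route the paper's Note~\ref{n2} suggests for the compact Hausdorff case), though note that only the inclusion $\mathrm{Aut}(\langle T\rangle,X)\subseteq\mathrm{Aut}(T,X)$ is needed, not the equality you assert. For (1) your route is genuinely different: the paper fixes a sequence $\langle t_n\rangle$ in $T$, uses UT to show that convergence of $\langle t_ix_0\rangle$ at one point propagates to pointwise convergence everywhere, invokes Fort's category lemma (Lemma~\ref{lem4.13}) to get one point of uniform convergence, transports that point by automorphisms to get uniform convergence on all of $X$, and concludes relative compactness of $T$ in $C(X,X)$ and hence equicontinuity by Ascoli--Arzel\`{a}; you instead observe that $\mathrm{Aut}(T,X)$ is a closed (hence Polish) subgroup of $\mathrm{Homeo}(X)$ acting transitively on the Polish space $X$, apply Effros' micro-transitivity theorem to lift $y_n\to x$ to automorphisms $a_n\to\mathrm{id}$ with $a_nx=y_n$, and kill any putative non-equicontinuity point by the commutation $t_na_n=a_nt_n$. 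Both are at bottom Baire-category arguments and both are correct; the paper's is the more self-contained (its only category input is the elementary Lemma~\ref{lem4.13}), while yours imports Effros as a black box but yields a shorter and arguably more conceptual derivation of $\mathrm{Equi}\,(T,X)=X$. The surjectivity/invertibility endgame ($tX$ is $\mathrm{Aut}$-invariant, then Lemma~\ref{lem4.12}) is the same in both.
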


\begin{note}
In view of Example~\ref{exa4.8}, PT $+$ Equicontinuous $+$ Abelian phase semigroup $\not\Rightarrow$ UT, in general semiflow setting.
\end{note}

\begin{note}\label{n2}
In fact, it is easy to verify that (2) of Theorem~\ref{thm4.14} also holds on compact Hausdorff phase space $X$ by using Ellis semigroup.
\end{note}

\begin{proof}
(1) Let $(T,X)$ be UT. Without loss of generality, we can regard $T$ as a subset of $C(X,X)$ the continuous self-maps of $X$, provided with the topology of uniform convergence.

Let $\langle t_n\rangle$ be any sequence in $T$. Choose a point $x_0\in X$. Some subsequence $\langle t_ix_0\rangle$ of $\langle t_nx_0\rangle$ converges, for $X$ is a compact metric space. Then we may suppose $\lim_{i\to\infty}t_ix_0=y_0$. If $x\in X$ and if $a\in\mathrm{Aut}\,(T,X)$ such that $a x_0=x$, then
$a y_0=\lim_{i\to\infty}at_ix_0=\lim_{i\to\infty}t_iax_0=\lim_{i\to\infty}t_ix$.
Hence the sequence $\langle t_i\rangle$ converges pointwise to some function $\varphi\colon X\rightarrow X$. Since $X$ is a Baire space, then by Lemma~\ref{lem4.13} there exists some point $x_1\in X$ such that $t_i\to\varphi$ uniformly at $x_1$.

Since $t_ia=at_i\ \forall a\in\mathrm{Aut}\,(T,X)$, hence $\varphi a=a\varphi\ \forall a\in\mathrm{Aut}\,(T,X)$. Then by UT of $(T,X)$, we can see that $\varphi$ is continuous on $X$. In fact, we need to show that $t_i\to\varphi$ uniformly on $X$. For this, let $d$ be a compatible metric on $X$ and let $x\in X$ and $\varepsilon>0$. Let $a\in\mathrm{Aut}\,(T,X)$ with $ax_1=x$ and let $\delta>0$ such that if $d(w,z)<\delta$, then $d(aw,az)<\varepsilon$. Let $V_{x_1}$ be a neighborhood of $x_1$ and $i_0$ a positive integer such that if $i\ge i_0$ and $w\in V_{x_1}$, then $(t_iw,\varphi w)<\delta$. Hence $d(t_iaw,\varphi aw)=d(at_iw,a\varphi w)<\varepsilon$. Thus if $y\in U=a[V_{x_1}]$, then $d(t_iy,\varphi y)<\varepsilon$ for all $i\ge i_0$. Therefore, $t_i\to\varphi$ uniformly on $X$.

This shows that every sequence $\langle t_n\rangle$ in $T$ has a uniformly convergent subsequence. Whence $T$ is relatively compact in $C(X,X)$. Therefore $T$ is equicontinuous on $X$ by the Ascoli-Arzel\`{a} theorem.
Finally by UT, each $t\in T$ is a surjection of $X$ as in the proof of Corollary~\ref{cor4.3}. Thus $(T,X)$ is invertible by Lemma~\ref{lem4.12}. So $(T,X)$ is equicontinuous surjective.

(2) Let $(T,X)$ be equicontinuous TT with $T$ an abelian semigroup. Lemma~\ref{lem4.9} follows that $(T,X)$ is minimal and further by Lemma~\ref{lem4.11}, $(T,X)$ is equicontinuous surjective. Then by Lemma~\ref{lem4.12}, $(T,X)$ is equicontinuous invertible. Further by Lemma~\ref{lem4.12}, it follows that $(\langle T\rangle,X)$ is an equicontinnous PT flow with $\langle T\rangle$ an abelian group. Gottschalk's theorem follows that $(\langle T\rangle,X)$ is UT. Since $\mathrm{Aut}\,(\langle T\rangle,X)\subseteq\mathrm{Aut}\,(T,X)$, thus $(T,X)$ is UT.

This thus concludes Theorem~\ref{thm4.14}.
\end{proof}

Since PT $\Rightarrow$ TT in flows, hence we now can generalize Gottschalk's theorem from flows to semiflows as follows:

\begin{cor}\label{cor4.15}
Let $(T,X)$ be a TT semiflow on a compact metric space $X$ with $T$ an abelian semigroup. Then $(T,X)$ is UT iff it is equicontinuous.
\end{cor}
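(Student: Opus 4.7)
The plan is to derive Corollary~\ref{cor4.15} immediately from the two halves of Theorem~\ref{thm4.14}, with essentially no new work required: the corollary is just a packaging of what has already been established, reorganized into biconditional form.

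For the ``only if'' direction, I would assume that $(T,X)$ is UT. Since $X$ is a compact metric space, Theorem~\ref{thm4.14}(1) applies directly and yields that $(T,X)$ is equicontinuous (and in fact invertible, though we do not need that here). It is worth noting in passing that this direction uses neither the TT hypothesis nor the abelianness of $T$; those assumptions are harmless extras on this side of the biconditional.

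For the ``if'' direction, I would assume that $(T,X)$ is equicontinuous. Combined with the two standing hypotheses of the corollary, namely that $(T,X)$ is TT and that $T$ is abelian, this is literally the hypothesis of Theorem~\ref{thm4.14}(2), whose conclusion is exactly that $(T,X)$ is UT. So the implication is immediate.

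There is no substantive obstacle at this stage. All the real machinery --- the Fort-type Baire-category argument via Lemma~\ref{lem4.13}, the Ascoli--Arzel\`{a} step, the passage to $(\langle T\rangle, X)$ through Lemmas~\ref{lem4.9}, \ref{lem4.11} and~\ref{lem4.12}, and the invocation of the classical Gottschalk theorem --- was already assembled inside the proof of Theorem~\ref{thm4.14}. The only conceptual point worth flagging is that TT cannot be relaxed to PT in the ``if'' direction (cf.~Example~\ref{exa4.8}), so the corollary is the sharp semiflow generalization of Gottschalk's theorem once one restricts to the abelian TT setting.
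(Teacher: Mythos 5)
Your proposal is correct and is exactly how the paper derives Corollary~\ref{cor4.15}: the corollary is stated as an immediate repackaging of the two parts of Theorem~\ref{thm4.14}, with part (1) giving the ``only if'' direction and part (2) giving the ``if'' direction. Your side remarks (that the ``only if'' direction needs neither TT nor abelianness, and that TT cannot be weakened to PT in view of Example~\ref{exa4.8}) are accurate and consistent with the paper's own notes.
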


We note that the metric on $X$ has played an important role in the proof of (1) of Theorem~\ref{thm4.14}. However, if $X$ is only a compact Hausdorff space non-metrizable, then what can we say?

Let $(T,X)$ be a minimal UT semiflow with compact Hausdorff phase space $X$. Then given $x,y\in X$, there is an $a\in\textrm{Aut}\,(T,X)$ such that $y=a(x)$. This implies that $(x,y)$ is almost periodic. Indeed, if $t_nx\to x^\prime$ (i.e. $t_n(x,y)\to(x^\prime,a(x^\prime))$) then there is $s_nx^\prime\to x$ (so $s_n(x^\prime, a(x^\prime))\to (x,y)$) for some net $\{s_n\}$ in $T$ since $(T,X)$ is minimal. Thus $(T,X)$ is distal.

For non-minimal case, we can obtain the following, whose proof may be simplified by using Ellis' semigroup \cite[pp.~15--22]{E69}.

\begin{thm}\label{thm4.16}
Let $(T,X)$ be a $($resp.~PT$)$ semiflow on a compact Hausdorff space $X$ with phase semigroup $T$. If $(T,X)$ is UT, then $(T,X)$ is $(\textrm{resp.~minimal})$ distal.
\end{thm}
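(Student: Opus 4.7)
The natural framework, as the authors hint in Note~\ref{n2}, is the Ellis semigroup $E := E(T,X)$, the closure of $T$ in $X^X$ under the product topology; it is a compact right-topological semigroup, and since every $a \in G := \mathrm{Aut}(T,X)$ is continuous and commutes with $T$, $a$ commutes pointwise with every $p \in E$. The standard criterion to aim for is that $(T,X)$ is distal iff $E$ is a group of bijections of $X$, so the whole task reduces to showing $E$ is a group; the PT $\Rightarrow$ minimal conclusion will then drop out easily.

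The plan has three steps. \emph{First}, every $p \in E$ is surjective: fixing $x_0 \in X$, one has $p(X) = p(Gx_0) = Gp(x_0) = X$ by UT. \emph{Second}, and this is the conceptual crux, every idempotent $u \in E$ equals $\mathrm{id}_X$: given $x_0 \in X$, UT supplies $a \in G$ with $a(x_0) = u(x_0)$, and then, using $au = ua$ and $u^2 = u$, a short computation yields $a^2(x_0) = u\bigl(a(x_0)\bigr) = u^2(x_0) = u(x_0) = a(x_0)$, whence $a(x_0) = x_0$ (using $a^{-1} \in G$) and therefore $u(x_0) = x_0$. \emph{Third}, by the Ellis-Numakura lemma each minimal left ideal of $E$ contains an idempotent, which must be $\mathrm{id}$, forcing the minimal left ideal to equal $E$ itself. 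Consequently, for any $p \in E$, the closed left ideal $Ep$ (closed because right multiplication in $E$ is continuous) equals $E$, so $\mathrm{id} \in Ep$ and $p$ has a left inverse; a semigroup with identity in which every element has a left inverse is a group. Hence $E$ is a group of bijections of $X$ and $(T,X)$ is distal.

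For the PT case, fix $x_0 \in \mathrm{Tran}(T,X)$. Compactness of $E$ and continuity of the evaluation map $p \mapsto p(x_0)$ yield $\overline{Tx_0} = Ex_0 = X$. For arbitrary $y \in X$, pick $p \in E$ with $p(x_0) = y$; since $E$ is a group, $x_0 = p^{-1}(y)$, so $\overline{Ty} = Ey \supseteq Ep^{-1}(y) = Ex_0 = X$. Thus every point of $X$ is transitive and $(T,X)$ is minimal.

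I expect the idempotent identification (Step 2) to be the decisive step: it is where UT is genuinely used, and it depends crucially on $G$ being a group so that $a^{-1}$ is available to cancel. Everything else is a standard consequence of the compact right-topological structure of $E$, and in particular no metrizability of $X$ is required — which is precisely what lets this approach succeed where the Baire-category argument of Theorem~\ref{thm4.14}(1) does not apply.
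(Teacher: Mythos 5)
Your proof is correct and rests on exactly the same key computation as the paper's: an idempotent $u$ of the enveloping semigroup $E$ commutes with every $a\in\mathrm{Aut}\,(T,X)$ (since each $a$ is continuous and commutes with $T$), and UT supplies an $a$ with $ax_0=ux_0$, whence $a(ax_0)=u(ax_0)=u^2x_0=ux_0=ax_0$ and invertibility of $a$ force $ux_0=x_0$. The only difference is packaging: the paper applies this to an idempotent of the subsemigroup $E_{y,w}=\{p\in E\mid p(y)=p(w)\}$ attached to a putative proximal pair and concludes by contradiction, whereas you prove globally that $E$ is a group (via minimal left ideals and the criterion that a semiflow is distal iff $E$ is a group) and then read off both distality and, in the PT case, minimality directly from the group structure --- equally valid, and your minimality argument neatly bypasses the paper's appeal to ``a distal point is almost periodic.''
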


\begin{proof}
For simplicity, write $H=\mathrm{Aut}\,(T,X)$. Then $Hx=X$ for all $x\in X$ by UT. To be contrary, assume $(T,X)$ is not distal; then there are two points $y,w\in X$ with $y\not=w$ such that there is a net $\{t_n\}$ in $T$ with $\lim_nt_ny=\lim_nt_nw=z$, for $X$ is compact.

Let $X^X$ be the compact Hausdorff space of all functions, continuous or not, from $X$ to itself with the pointwise convergence topology. Let $E$ be the closure of $T$ in $X^X$, where we identify each $t\in T$ with the transition map $x\mapsto tx$ of $X$ to $X$ associated with $(T,X)$.
Then $E_{y,w}$, defined by $E_{y,w}=\left\{p\in E\,|\,p(y)=p(w)\right\}$,
is a non-empty semigroup with the composition of maps such that $E_{y,w}$ is compact Hausdorff and for all $q\in E$, $R_q\colon p\mapsto pq$ is continuous under the pointwise topology. Whence there is an element $u\in E_{y,w}$ with $u^2=u$ (cf.~\cite[Lemma~2.9]{E69}). Clearly, $hu=uh\ \forall h\in H$. Now let $x\in X$. Then $Hx=Hux$. Hence there exists $h\in H$ with $hx=ux$. Then $hux=uhx=u^2x=ux=hx$ implies $ux=x$. Thus $u\in E_{y,w}$ is the identity so that $y=w$ a contradiction.

This shows that $(T,X)$ is distal if it is UT. Because a distal point is almost periodic, $(T,X)$ is minimal if it is PT and UT. Thus proves Theorem~\ref{thm4.16}.
\end{proof}

Then by Theorem~\ref{thm4.16} combining with Furstenberg~\cite{F63}, we can easily obtain the following consequence.

\begin{cor}\label{cor4.17}
If $(T,X)$ is a UT semiflow on a compact Hausdorff space $X$, then it admits an invariant Borel probability measure.
\end{cor}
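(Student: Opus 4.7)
The plan is to combine Theorem~\ref{thm4.16} with Furstenberg's structure theory of distal flows by passing to a minimal subset.

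First, I would observe that by Theorem~\ref{thm4.16}, the UT hypothesis forces $(T,X)$ to be distal, and by Lemma~\ref{lem4.12}(1) it is then invertible, so the action extends to the flow $(\langle T\rangle,X)$. A straightforward check shows that distality is preserved under this extension: if $(\langle T\rangle,X)$ were not distal, two distinct points $x,y$ would admit a net $\{g_n\}\subseteq\langle T\rangle$ with $g_nx$ and $g_ny$ both tending to a common limit; writing each $g_n$ as $s_n^{-1}t_n$ with $s_n,t_n\in T$ and using compactness of $X$ to extract convergent subnets of $s_nx, s_ny$ would contradict distality of $(T,X)$.

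Next, since $X$ is compact Hausdorff, Zorn's lemma yields a nonempty closed $T$-invariant minimal set $M\subseteq X$. Distality passes to subsystems, so $(T,M)$ is minimal distal; equivalently, $(\langle T\rangle,M)$ is a minimal distal flow on a compact Hausdorff space.

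Now I would invoke Furstenberg's theorem from \cite{F63}: every minimal distal flow on a compact (Hausdorff) space admits an invariant Borel probability measure. (In the non-metric case this follows either by realizing the flow as an inverse limit of metric factors in the Furstenberg tower, or via the Ellis semigroup, whose group structure in the distal case produces an invariant mean.) Let $\mu_M$ be such a measure on $M$, and define $\mu$ on $X$ by $\mu(B)=\mu_M(B\cap M)$ for every Borel set $B\subseteq X$. Since $M$ is $T$-invariant and $\mu_M$ is $T$-invariant on $M$, the measure $\mu$ is a $T$-invariant Borel probability measure on $X$.

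The only real obstacle is the justification of Furstenberg's theorem in the compact Hausdorff setting; the rest is a direct chain of implications. Once this is cited (it is a standard extension available through the Ellis semigroup formalism in \cite{E69}), the corollary follows without further work.
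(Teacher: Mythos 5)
Your proposal follows exactly the route the paper intends: the paper's entire ``proof'' of Corollary~\ref{cor4.17} is the sentence preceding it, namely Theorem~\ref{thm4.16} (UT $\Rightarrow$ distal) combined with Furstenberg~\cite{F63}, and your write-up is a correct fleshing-out of that citation --- passing to a minimal subset, applying Furstenberg's theorem for minimal distal group actions, and pushing the measure forward trivially to $X$ (note that $t^{-1}B\cap M=(t|_M)^{-1}(B\cap M)$ by invariance of $M$, so your extension step is sound).

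The one step that does not survive scrutiny as written is your verification that $(\langle T\rangle,X)$ is distal: you factor an arbitrary $g_n\in\langle T\rangle$ as $s_n^{-1}t_n$ with $s_n,t_n\in T$, but $\langle T\rangle=T^{-1}T$ only under an abelian (or Ore-type) hypothesis on $T$, and Corollary~\ref{cor4.17} assumes none. In general an element of $\langle T\rangle$ is an arbitrary alternating word in $T$ and $T^{-1}$, and the subnet bookkeeping does not reduce to a single pair $(s_n,t_n)$. The clean repair is the one the paper itself hints at before Theorem~\ref{thm4.16}: use the enveloping semigroup. For a distal semiflow on a compact Hausdorff space, $E(X)=\mathrm{cls}_{X^X}T$ is a compact right-topological \emph{group} of bijections of $X$ (Ellis, \cite{E69}); since each $t\in T$ is bijective by Lemma~\ref{lem4.12}(1), the group inverse of $t$ in $E(X)$ is the map $t^{-1}$, so $\langle T\rangle\subseteq E(X)$ and hence $E(\langle T\rangle,X)\subseteq E(X)$ is again a group, which is equivalent to distality of $(\langle T\rangle,X)$. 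With that substitution your argument is complete; the remaining issue you flag --- Furstenberg's theorem beyond the metric case --- is genuine but standard, and is handled exactly as you say, via the Ellis-semigroup form of the structure theorem or by inverse limits of metric factors.
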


Therefore by Theorem~\ref{thm4.16}, it follows that Ellis' `two circle' minimal set \cite[Example~5.29]{E69} is not UT; for otherwise, it would be distal.

In addition, since a minimal semiflow is TT, hence PT $+$ UT $\Rightarrow$ TT on compact Hausdorff phase spaces by Theorem~\ref{thm4.16}.

\section{Almost right C-semigroup actions}\label{sec5}
We will first introduce a kind of phase semigroup, which includes the two important special cases: $T=(\mathbb{R}_+,+)$ and $T=(\mathbb{Z}_+,+)$ equipped respectively with the usual topologies.

Recall that a topological semigroup $T$ is called a \textit{right C-semigroup}~\cite{KM} if $T\setminus Ts$ is relatively compact in $T$ for all $s\in T$.
In particular, $T\setminus Ts$, for $s\in T$, is a finite set if $T$ is a discrete right C-semigroup semigroup like $T=\mathbb{Z}_+$.

\begin{sn}[{cf.~\cite{Dai}}]\label{def5.1}
A topological semigroup $T$ is called an \textit{almost right C-semigroup} if and only if
$\{t\in T\,|\,\textrm{cls}_T(T\setminus Tt)\textrm{ is compact in }T\}$
is dense in $T$.
\end{sn}

Clearly each topological group is an almost right C-semigroup. See \cite[Examples~0.5]{Dai} for some examples of almost right C-semigroups which are not right C-semigroups.

\begin{thm}\label{thm5.2}
Assume $(T,X)$ is a PT semiflow satisfying the following two conditions:
\begin{enumerate}
\item[$(a)$] $\mathrm{Int}_XTx_0=\emptyset$ for some $x_0\in\mathrm{Tran}\,(T,X)$ and
\item[$(b)$] $T$ is an almost right \textit{C}-semigroup.
\end{enumerate}
Then $\mathrm{Tran}\,(T,X)$ is dense in $X$ and hence $(T,X)$ is TT.
\end{thm}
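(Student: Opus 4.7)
The plan is to exhibit transitive points of the form $sx_0$ for $s$ ranging over a dense subset of $T$, and then transport this density across to $X$ via the continuous transition map $s\mapsto sx_0$. Concretely, set
\begin{equation*}
S=\{s\in T\,|\,\textrm{cls}_T(T\setminus Ts)\textrm{ is compact in }T\},
\end{equation*}
which by hypothesis (b) is dense in $T$.

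The central step is to show that $sx_0\in\mathrm{Tran}\,(T,X)$ for every $s\in S$. Fix such an $s$ and write $K=\textrm{cls}_T(T\setminus Ts)$, a compact subset of $T$. By joint continuity of the semiflow the map $t\mapsto tx_0$ is continuous, so $Kx_0$ is compact in $X$ and (working in the Hausdorff setting tacit in most of the paper) therefore closed. Moreover $Kx_0\subseteq Tx_0$, so from hypothesis (a) we get $\textrm{Int}_XKx_0\subseteq\textrm{Int}_XTx_0=\emptyset$, i.e.~$Kx_0$ is nowhere dense. Now the trivial inclusion $T\subseteq Ts\cup(T\setminus Ts)$ applied to $x_0$ yields
\begin{equation*}
Tx_0\subseteq Tsx_0\cup Kx_0,
\end{equation*}
and taking closures gives $X=\textrm{cls}_XTx_0\subseteq\textrm{cls}_XTsx_0\cup Kx_0$. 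If $\textrm{cls}_XTsx_0$ were a proper subset of $X$, its complement would be a non-empty open set sitting inside the nowhere dense set $Kx_0$, absurd. Hence $Tsx_0$ is dense, so $sx_0\in\mathrm{Tran}\,(T,X)$.

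It remains to transfer density from $S\subseteq T$ to $Sx_0\subseteq X$. Continuity of $t\mapsto tx_0$ together with density of $S$ in $T$ implies that $Sx_0$ is dense in $Tx_0$, and $Tx_0$ is dense in $X$ by PT; thus $Sx_0\subseteq\mathrm{Tran}\,(T,X)$ is dense in $X$. The TT conclusion is then immediate: given non-empty open $U,V\subseteq X$, choose $y\in U\cap\mathrm{Tran}\,(T,X)$ and use density of $Ty$ to find $t\in T$ with $ty\in V$, so $tU\cap V\neq\emptyset$. The delicate point is the nowhere-denseness of $Kx_0$, which requires $Kx_0$ to be closed so that control of $\textrm{Int}_XKx_0$ really controls its closure; hypothesis (a) is designed precisely to make this interior-control step work, while hypothesis (b) provides the abundant supply of $s\in T$ for which the compact ``tail'' $K$ exists.
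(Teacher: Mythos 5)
Your proof is correct and follows essentially the same route as the paper's: decompose $T=Ts\cup(T\setminus Ts)$ for $s$ in the dense set of elements with compact tail, use hypothesis $(a)$ to see that $Kx_0$ cannot swallow a non-empty open set, conclude $sx_0\in\mathrm{Tran}\,(T,X)$, and push density of that set of $s$ through $t\mapsto tx_0$. Your explicit remark that $Kx_0$ must be closed (compact in a Hausdorff space) for the interior argument to bite is a point the paper glosses over, but otherwise the two arguments coincide.
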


\begin{proof}
Let $x_0\in\mathrm{Tran}\,(T,X)$ and let $U$ be any non-empty open subset of $X$. We simply write $G$ for the dense set $\{t\in T\,|\,\textrm{cls}_T(T\setminus Tt)\textrm{ is compact in }T\}$. Then $\textrm{cls}_X{Gx_0}=\textrm{cls}_X{Tx_0}=X$.

Given any $s\in G$ with $s\not=e$, we will show that $U\cap\textrm{cls}_XTsx_0\not=\emptyset$. To be contrary, assume that $U\cap\textrm{cls}_XTsx_0=\emptyset$. Set $K=\textrm{cls}_T(T\setminus Ts)$, which is compact in $T$ by condition $(b)$. Since $T=K\cup Ts$ and so
\begin{equation*}
X=\textrm{cls}_XTx_0=\textrm{cls}_X\left(Kx_0\cup Tsx_0\right)=Kx_0\cup\textrm{cls}_XTsx_0,
\end{equation*}
hence $U\subseteq Kx_0$, which contradicts condition $(a)$. Thus $\textrm{cls}_XTsx_0=X$ for all $s\in G$ and then $Gx_0\subseteq\mathrm{Tran}\,(T,X)$.
This proves Theorem~\ref{thm5.2}.
\end{proof}

We notice here that condition $(a)$ is very important for the consequences of Theorem~\ref{thm5.2} and Corollary~\ref{cor5.3} below, which implies that $(T,X)$ has no isolated orbit if $T$ is a group. Otherwise, Example~\ref{exa4.8} is a counterexample.

\begin{cor}\label{cor5.3}
Let $(T,X)$ be a PT equicontinuous semiflow with $X$ a compact Hausdorff space such that:
\begin{enumerate}
\item[$(a)$] $\mathrm{Int}_XTx_0=\emptyset$ for some $x_0\in\mathrm{Tran}\,(T,X)$ and
\item[$(b)$] $T$ is an abelian almost right \textit{C}-semigroup.
\end{enumerate}
Then $(T,X)$ is UT, minimal, and distal.
\end{cor}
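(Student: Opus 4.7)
The plan is to bootstrap from Theorem~\ref{thm5.2} to get TT, and then to pipeline TT+equicontinuity through the structure lemmas of $\S\ref{sec4}$ to extract minimality, distality, and UT.

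First, I would observe that hypotheses $(a)$ and $(b)$ of Corollary~\ref{cor5.3} are precisely the hypotheses of Theorem~\ref{thm5.2}, so that theorem applies and yields that $(T,X)$ is TT (indeed, $\mathrm{Tran}\,(T,X)$ is dense). Combining this with the equicontinuity hypothesis and the fact that $X$ is compact Hausdorff, Lemma~\ref{lem4.9} immediately gives that $(T,X)$ is minimal.

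Next, to obtain distality, I would invoke Lemma~\ref{lem4.11}: since $(T,X)$ is minimal with $T$ abelian and $X$ compact Hausdorff, $(T,X)$ is surjective. Then equicontinuity together with surjectivity lets me apply Lemma~\ref{lem4.12}(2) to conclude that $(T,X)$ is distal.

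Finally, for UT, I would appeal to part (2) of Theorem~\ref{thm4.14}, which states that a TT equicontinuous semiflow with abelian phase semigroup is UT. The only subtlety is that Theorem~\ref{thm4.14} is stated for compact metric $X$, while here $X$ is merely compact Hausdorff; however Note~\ref{n2} explicitly records that this implication remains valid in the compact Hausdorff setting via Ellis semigroup techniques, so there is no gap. This completes all three assertions. The main (and essentially only) hurdle is bookkeeping the compact Hausdorff versus metric issue in Theorem~\ref{thm4.14}, which is already handled by Note~\ref{n2}; once that is cited, the remaining steps are pure concatenation of results from $\S\ref{sec4}$ and $\S\ref{sec5}$.
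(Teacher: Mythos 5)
Your proof is correct. Both you and the paper start identically: Theorem~\ref{thm5.2} gives TT from hypotheses $(a)$ and $(b)$, and then Note~\ref{n2} (the compact Hausdorff extension of part (2) of Theorem~\ref{thm4.14}) gives UT from TT, equicontinuity, and abelianness of $T$ --- you are right that Note~\ref{n2} is exactly what bridges the metric-versus-Hausdorff gap. Where you diverge is in how minimality and distality are extracted. The paper gets both in one stroke from Theorem~\ref{thm4.16} (PT $+$ UT on a compact Hausdorff space implies minimal distal), so its chain is TT $\Rightarrow$ UT $\Rightarrow$ minimal distal. You instead bypass UT for these two conclusions: Lemma~\ref{lem4.9} gives minimality directly from TT plus equicontinuity, then Lemma~\ref{lem4.11} (minimal, abelian, compact Hausdorff $\Rightarrow$ surjective) and Lemma~\ref{lem4.12}(2) (equicontinuous surjective $\Rightarrow$ distal) give distality. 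Your route is more elementary in that it avoids the Ellis-semigroup idempotent argument underlying Theorem~\ref{thm4.16} and leans only on the equicontinuity machinery (indeed it retraces the internal steps of the proof of Theorem~\ref{thm4.14}(2)); the paper's route is more economical at the citation level and showcases UT as the organizing hypothesis from which minimality and distality both flow. Both arguments are complete and correct.
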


\begin{proof}
By Theorem~\ref{thm5.2}, $(T,X)$ is TT equicontinuous with $T$ abelian. So $(T,X)$ is UT by Note~\ref{n2} to Theorem~\ref{thm4.14}. Then by Theorem~\ref{thm4.16}, $(T,X)$ is minimal distal.
\end{proof}

Clearly a discrete semigroup is a right C-semigroup if and only if it is an almost right C-semigroup. The second part of the following Proposition~\ref{prop5.4} is just \cite[(1) of Proposition~3.2]{KM}.

\begin{prop}\label{prop5.4}
Let $(T,X)$ be a PT semiflow, where $X$ has no isolated point. If $T$ is a right \textit{C}-semigroup under the discrete topology, then $\mathrm{Tran}\,(T,X)$ is dense in $X$ and hence $(T,X)$ is TT.
\end{prop}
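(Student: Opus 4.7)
The plan is to mimic the proof of Theorem~\ref{thm5.2}, exploiting the fact that under the discrete topology the relatively compact subsets of $T$ are precisely the finite subsets. Thus the right $C$-semigroup hypothesis translates to: for every $s\in T$, the set $F_s:=T\setminus Ts$ is \emph{finite}. In particular, $T$ is trivially an almost right $C$-semigroup, so the $T$-side hypothesis of Theorem~\ref{thm5.2} is met; only condition (a) must be replaced, and the no-isolated-points hypothesis on $X$ will do this work.

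Fix any $x_0\in\mathrm{Tran}\,(T,X)$ and $s\in T$. The key step is to show $sx_0\in\mathrm{Tran}\,(T,X)$. From $T=F_s\cup Ts$ we obtain $Tx_0=F_s x_0\cup Tsx_0$, and, since the finite set $F_sx_0$ is closed in $X$, taking closures gives
\begin{equation*}
X=\textrm{cls}_XTx_0=F_sx_0\cup\textrm{cls}_XTsx_0.
\end{equation*}
If $\textrm{cls}_XTsx_0$ were a proper subset of $X$, then $X\setminus\textrm{cls}_XTsx_0$ would be a non-empty open subset of $X$ contained in the finite set $F_sx_0$. But a non-empty open subset of $X$ cannot be finite once $X$ has no isolated points. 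This contradiction forces $\textrm{cls}_XTsx_0=X$, i.e., $sx_0\in\mathrm{Tran}\,(T,X)$.

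Varying $s$, we conclude $Tx_0\subseteq\mathrm{Tran}\,(T,X)$; since $Tx_0$ is dense in $X$ by hypothesis, $\mathrm{Tran}\,(T,X)$ itself is dense in $X$. Density of the transitive points then yields TT at once: given non-empty open sets $U,V\subseteq X$, choose $y\in U\cap\mathrm{Tran}\,(T,X)$ and some $t\in T$ with $ty\in V$, so $t\in N_T(U,V)$.

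The only delicate point is the implicit topological fact ``a non-empty open set contained in a finite set forces an isolated point''; this relies on the mild separation (T$_1$) that is tacitly present throughout the paper's framework, and is the main obstacle to turning the above sketch into an airtight argument. Everything else is a direct discretization of the proof of Theorem~\ref{thm5.2}.
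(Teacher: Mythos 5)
Your proof is correct and is essentially the paper's own argument: the paper likewise reruns the proof of Theorem~\ref{thm5.2} with $K=T\setminus Ts$ now finite, and derives a contradiction from a non-empty open set being contained in the finite set $Kx_0$ when $X$ has no isolated points. The separation caveat you flag is present to exactly the same degree in the paper's proof, so nothing further is needed.
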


\begin{proof}
In the above proof of Theorem~\ref{thm5.2}, $K=T\setminus Ts$ is finite and $U\subseteq Kx_0$ implies that $X$ contains isolated points. This contradiction completes the proof.
\end{proof}

\begin{cor}\label{cor5.5}
Let $(T,X)$ be a PT semiflow with $T$ a countable semigroup, where $X$ has no isolated point. If $T$ is an almost right \textit{C}-semigroup, then $\mathrm{Tran}\,(T,X)$ is dense in $X$ and hence $(T,X)$ is TT.
\end{cor}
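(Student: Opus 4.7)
The plan is to derive the result as a direct corollary of Theorem~\ref{thm5.2}. Hypothesis~(b) of that theorem is given, so the only thing to check is hypothesis~(a): that for some (equivalently, every) $x_0\in\mathrm{Tran}\,(T,X)$, the orbit $Tx_0$ has empty interior in $X$.

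Fix $x_0\in\mathrm{Tran}\,(T,X)$. Countability of $T$ makes $Tx_0$ a countable dense subset of $X$. I claim $\mathrm{Int}_XTx_0=\emptyset$. Otherwise $Tx_0$ would contain a non-empty open set $U\subseteq X$, and $U$ would itself be a countable non-empty open subset of $X$. As an open subspace, $U$ inherits from $X$ both the ``no isolated point'' property and (in the $T_1$ Baire setting tacit throughout the paper's topological arguments) the Baire property. Consequently every singleton in $U$ is closed with empty interior in $U$, hence nowhere dense, and writing $U$ as the countable union of its singletons would exhibit $U$ as meager in itself, contradicting the Baire category theorem. Thus hypothesis~(a) of Theorem~\ref{thm5.2} holds.

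Applying Theorem~\ref{thm5.2} then yields immediately that $\mathrm{Tran}\,(T,X)$ is dense in $X$ and that $(T,X)$ is TT, which is the desired conclusion.

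The only genuine technical point is the ``countable non-empty open set'' step, which is the analog for countable $T$ of the finiteness step in Proposition~\ref{prop5.4}: there $K=T\setminus Ts$ is finite, forcing an isolated point of $X$ at once; here $K=\mathrm{cls}_T(T\setminus Ts)$ is only compact, but remains countable because $T$ itself is countable, so the bare $T_1$ argument of Proposition~\ref{prop5.4} must be upgraded to a Baire-category one. No other obstacle is expected.
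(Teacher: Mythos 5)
Your proof follows the same route as the paper's: both reduce to Theorem~\ref{thm5.2} by checking that countability of $T$ together with the absence of isolated points forces $\mathrm{Int}_XTx_0=\emptyset$. The paper asserts this implication in a single line, whereas you correctly observe that it is not automatic for an arbitrary topological space (e.g.\ $X=\mathbb{Q}$ is countable, has no isolated points, and equals its own interior) and you supply the Baire-category argument that makes it work; the $T_1$/Baire hypotheses you flag are indeed left unstated in the corollary, so your version of the step is the more careful one.
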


\begin{proof}
Since $T$ is a countable semigroup, then $\mathrm{Int}_XTx=\emptyset$, for all $x\in X$ for $X$ has no isolated points. Then Corollary~\ref{cor5.5} follows from Theorem~\ref{thm5.2}.
\end{proof}
\section{Syndetic transitivity and syndetic sensitivity}\label{sec6}
We will consider another kind of transitivity, which is important for chaos of semiflows (see, e.g., \cite{DT} and Theorem~\ref{thm6.12}).

\begin{sn}\label{def6.1}
$(T,X)$ is called \textit{syndetically transitive} (ST\index{$\textrm{ST}$}) if $N_T(U,V)$ is syndetic in $T$ for all non-empty open sets $U,V$ in $X$.
\end{sn}

Note that ST is TT, but the converse is false. For example, $\mathbb{R}\times X\rightarrow X,\ (t,x)\mapsto t+x$, where $X=\mathbb{R}\cup\{\infty\}$ is the one-point compactification as in Example~\ref{exa4.4}, is TT but not ST.

On a Polish space $X$, $(T,X)$ is TT if and only if $\mathrm{Tran}\,(T,X)$ is a residual subset of $X$ (cf.~\cite[(2) of Proposition~3.2]{KM} and \cite{DT}). However, this is not the case for semiflows with non-separable phase spaces (cf.~\cite[Example~4.17]{EE}).

In fact, in general, $\mathrm{ST}\not\Rightarrow\mathrm{PT}$, for flows on compact \textit{non-separable} Hausdorff spaces. Let's construct such an example as follows:

\begin{exa}\label{exa6.2}
Let $X=Y^T$ be the space of all functions $f\colon T\rightarrow Y$, continuous or not, equipped with the pointwise convergence topology, where $Y$ is a compact Hausdorff space and $T$ is an infinite discrete group. Then $X$ is a compact Hausdorff space.
Given $t\in T$ and $f\in X$, define
$f^t\colon T\rightarrow Y$ by $\tau\mapsto f(\tau t)$.
We now define the flow on $X$ with the phase group $T$ as follows:
$T\times X\rightarrow X$ by $(t,f)\mapsto f^t$.

(1) First we can assert that $(T,X)$ is ST. Indeed, for all non-empty open subsets $U,V$ of $Y$ and $\tau_1,\dotsc,\tau_n, s_1,\dotsc,s_n$ in $T$, let
\begin{gather*}
\mathcal{U}=\left[\{\tau_1,\dotsc,\tau_n\},U\right]=\left\{f\in X\,|\,f(\tau_i)\in U, i=1,\dotsc,n\right\}\\ \mathcal{V}=\left[\{s_1,\dotsc,s_n\},V\right].
\end{gather*}
Let
\begin{equation*}
T_0=\left\{s_i^{-1}\tau_j\,|\,i=1,\dotsc,n;\ j=1,\dotsc,n\}\cup\{s_i^{-1}s_j\,|\,i=1,\dotsc,n;\  j=1,\dotsc,n\right\}
\end{equation*}
and
\begin{equation*}
T_1=T\setminus T_0.
\end{equation*}
Since $T$ is an infinite discrete group and $T_0$ is finite, it is easy to check that $T_1$ is syndetic in $T$. (In fact, if $K=\{e\}\cup t_0T_0^{-1}$ for some $t_0\in T_1$, then $Kt\cap T_1\not=\emptyset$ for all $t\in T=T_0\cup T_1$ so $T_1$ is syndetic in $T$.)
Thus for all $\bt\in T_1$,
\begin{gather*}
\{s_1\bt,\dotsc,s_n\bt\}\cap\{\tau_1,\dotsc,\tau_n, s_1,\dotsc,s_n\}=\emptyset.
\end{gather*}
Now choose $f\in X$ such that $f(\tau_i)\in U$ and $f(s_i\bt)\in V$ for $1\le i\le n$. Thus $f\in\mathcal{U}$ and $f^{\bt}\in\mathcal{V}$ so that $N_T(\mathcal{U},\mathcal{V})\supseteq T_1$. Thus
\begin{itemize}
\item {\it $(T,X)$ is ST.}
\end{itemize}

(2) Now we choose $Y$ a non-separable space (so $Y$ has no countable dense subset) and let $T$ be a countable infinite discrete group. Then $X$ has no countable dense subset.
Because $X$ is not separable and $T$ is countable, it follows that:
\begin{itemize}
\item {\it $(T,X)$ is not PT; i.e., $\mathrm{Tran}\,(T,X)=\emptyset$.}
\end{itemize}
This completes the construction of our Example~\ref{exa6.2}.
\end{exa}

In view of Example~\ref{exa6.2} we now ask two questions:

\begin{quote}
\begin{enumerate}
\item {\it If $(T,X)$ is a TT and pointwise almost periodic flow/semiflow on a compact non-separable Hausdorff space, is it PT $($or equivalently minimal$)$?}
\item {\it If $(T,X)$ is minimal and ST with $X$ a locally compact, non-compact, Hausdorff space, is $(T,X)$ pointwise almost periodic?}
\end{enumerate}
\end{quote}

Let $T$ be an infinite discrete group. Recall that $(T,X)$ is called \textit{strongly mixing} if $N_T(U,V)$ is co-finite for all non-empty open sets $U,V$ in $X$. By the arguments in Example~\ref{exa6.2}, $(T,X)$ is strongly mixing based on any compact Hausdorff space $Y$.

\begin{lem}[{\cite[Lemma~2.3]{DT}}]\label{lem6.3}
If $(T,X)$ is TT with dense almost periodic points, then it is ST.
\end{lem}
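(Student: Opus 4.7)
\textbf{Proof plan for Lemma~\ref{lem6.3}.} The plan is to reduce syndeticity of an arbitrary $N_T(U,V)$ to syndeticity of $N_T(x,W)$ for an almost periodic point $x$ in a suitably chosen open set $W$, and then to observe that left translates of syndetic subsets of $T$ remain syndetic.

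First I would fix non-empty open sets $U,V\subseteq X$ and apply TT to obtain some $t_0\in T$ with $t_0U\cap V\neq\emptyset$. Joint continuity of $(t,x)\mapsto tx$ then yields a non-empty open set $W\subseteq U$ with $t_0W\subseteq V$. Since almost periodic points are dense in $X$, I can pick an almost periodic point $x\in W$, and by almost periodicity the return set $S:=N_T(x,W)=\{t\in T:tx\in W\}$ is syndetic in $T$.

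Next, for every $s\in S$ we have $sx\in W\subseteq U$ and $(t_0s)x=t_0(sx)\in t_0W\subseteq V$, so taking the point $x\in U$ shows $t_0s\in N_T(U,V)$. Hence
\begin{equation*}
t_0S\;\subseteq\; N_T(U,V).
\end{equation*}
It therefore suffices to show that $t_0S$ is syndetic whenever $S$ is. By definition of syndeticity there is a compact set $K\subseteq T$ with $Kt\cap S\neq\emptyset$ for every $t\in T$; the set $t_0K$ is compact (continuous image of $K$ under left multiplication by $t_0$), and for any $t\in T$,
\begin{equation*}
(t_0K)t\cap t_0S\;\supseteq\; t_0\bigl(Kt\cap S\bigr)\;\neq\;\emptyset,
\end{equation*}
so $t_0K$ witnesses the syndeticity of $t_0S$. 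Since any superset of a syndetic set is syndetic (the same compact $K$ works), $N_T(U,V)$ is syndetic and $(T,X)$ is ST.

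The only mildly delicate point in this plan is the last step, the stability of the syndetic property under left translation; everything else is a direct combination of TT, continuity, and the definition of an almost periodic point. I do not foresee any essential obstacle, since the compactness of $t_0K$ follows at once from the continuity of left multiplication built into the definition of a topological semigroup.
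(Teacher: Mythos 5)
Your proof is correct and follows the standard argument for this lemma (which the paper only cites from \cite{DT}): shrink $U$ to an open $W$ with $t_0W\subseteq V$, take an almost periodic point $x\in W$, and translate the syndetic set $N_T(x,W)$ by $t_0$ --- exactly the same translation device the paper itself uses in the proof of Proposition~\ref{prop6.11}. Your verification that left translates of syndetic sets remain syndetic (via the compact set $t_0K$) is the right justification for the step the paper leaves implicit.
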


Thus by Corollary~\ref{cor3.9} together with Lemma~\ref{lem6.3}, we can easily obtain the following.

\begin{cor}
If $(T,X)$ is PT with dense almost periodic points, then $(T,X)$ is ST.
\end{cor}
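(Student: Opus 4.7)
The plan is to chain together two results already established in the paper, so the proof should be essentially a one-line observation. The hypothesis gives us two ingredients: point-transitivity of $(T,X)$ and a dense set of almost periodic points in $X$.

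First I would invoke Corollary~\ref{cor3.9}, which upgrades PT to TT whenever almost periodic (or minimal) points are dense; this is exactly the hypothesis of the corollary. So after one application we know $(T,X)$ is TT and still has dense almost periodic points.

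Next I would feed this directly into Lemma~\ref{lem6.3}, which takes a TT semiflow with dense almost periodic points and concludes ST. The combination gives precisely the statement. No additional calculation is needed, and there is no real obstacle here: the work was done in Section~3 (to pass PT $+$ dense pre-recurrent points $\Rightarrow$ TT, using that almost periodic points are pre-recurrent by Lemma~\ref{lem3.5}) and in Lemma~\ref{lem6.3} (to pass TT $+$ dense almost periodic points $\Rightarrow$ ST). The corollary is just the composition of these two implications, so my proof would read essentially: \emph{By Corollary~\ref{cor3.9}, $(T,X)$ is TT; since the almost periodic points remain dense, Lemma~\ref{lem6.3} then yields ST.}
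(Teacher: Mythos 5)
Your proof is correct and is exactly the paper's argument: the corollary is stated there as an immediate consequence of Corollary~\ref{cor3.9} (PT plus dense almost periodic points gives TT) combined with Lemma~\ref{lem6.3} (TT plus dense almost periodic points gives ST). Nothing further is needed.
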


\begin{snAus}
In the remainder of this section, let $X$ be a uniform Hausdorff space with a symmetric uniform structure $\mathscr{U}_X$. For $x\in X, A\subset X$ and $\varepsilon\in\mathscr{U}_X$, we write
\begin{equation*}
\varepsilon[x]=\{y\in X\,|\,(x,y)\in\varepsilon\}\quad \textrm{and}\quad \varepsilon[A]={\bigcup}_{x\in A}\varepsilon[x].
\end{equation*}
Given $(T,X)$ and $\varepsilon,\delta\in\mathscr{U}_X$, the ``$(\varepsilon,\delta)$-stable-time set'' at a point $x\in X$ is defined as follows:
\begin{equation*}
T_{\varepsilon\textrm{-}\delta}^s(x)=\{t\in T\,|\,t(\delta[x])\subseteq\varepsilon[tx]\}.
\end{equation*}
\end{snAus}

Next we will consider a simple application of ST in chaos. For this, we first need to introduce some notions.

\begin{sn}[{cf.~\cite{KM,MM,DT,WZ}}]\label{def6.6}
Let $(T,X)$ be a semiflow on $(X,\mathscr{U}_X)$ with phase semigroup $T$. Then:
\begin{enumerate}
\item $(T,X)$ is called \textit{sensitive} if there exists an $\varepsilon\in\mathscr{U}_X$ such that for all $x\in X$ and all $\delta\in\mathscr{U}_X$, there is an $x^\prime$ with $(x,x^\prime)\in\delta$ and $t(x,x^\prime)\not\in\varepsilon$ for some $t\in T$; that is, $T_{\varepsilon\textrm{-}\delta}^s(x)\not=T$.

\item $(T,X)$ is called \textit{syndetically sensitive} if there exists an $\varepsilon\in\mathscr{U}_X$ such that for all $x\in X$ and $\delta\in\mathscr{U}_X$,
$T_{\varepsilon\textrm{-}\delta}^s(x)$ is not thick in $T$.

\item $(T,X)$ is said to be \textit{pointwise thickly stable} if given $\varepsilon\in\mathscr{U}_X$ and $x\in X$, one can find a $\delta\in\mathscr{U}_X$ such that $T_{\varepsilon\textrm{-}\delta}^s(x)$ is thick in $T$.

\item $(T,X)$ is called \textit{pointwise equicontinuous} if given $\varepsilon\in\mathscr{U}_X$ and $x\in X$, one can find a $\delta\in\mathscr{U}_X$ such that $T_{\varepsilon\textrm{-}\delta}^s(x)=T$; see 3 of Definition~\ref{def4.5}.

\item $(T,X)$ is called \textit{uniformly almost periodic} if given $\varepsilon\in\mathscr{U}_x$, there is a syndetic subset $A$ of $T$ such that $Ax\subset\varepsilon[x]$ for all $x\in X$.
\end{enumerate}
\end{sn}

Clearly, syndetically sensitive $\Rightarrow$ sensitive, and equicontinuous $\Rightarrow$ thickly stable. Moreover, by the classical ``Lebesgue covering lemma'', we can easily obtain the following uniformity:

\begin{lem}\label{lem6.7}
If $(T,X)$ is pointwise thickly stable with $X$ a compact Hausdorff space, then for every $\varepsilon\in\mathscr{U}_X$ there exists a $\delta\in\mathscr{U}_X$ such that $T_{\varepsilon\textrm{-}\delta}^s(x)$ is thick in $T$ for all $x\in X$.
\end{lem}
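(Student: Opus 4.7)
The plan is to bootstrap the pointwise property into a uniform one by a Lebesgue-number argument on the compact space $X$. First, choose a symmetric entourage $\varepsilon_1\in\mathscr{U}_X$ with $\varepsilon_1\circ\varepsilon_1\subseteq\varepsilon$. For each $x\in X$, pointwise thick stability applied to $\varepsilon_1$ supplies a symmetric entourage $\delta_x\in\mathscr{U}_X$, which by monotonicity of the stable-time set in $\delta$ (shrinking $\delta$ only enlarges $T_{\varepsilon\textrm{-}\delta}^s(x)$) may be taken inside $\varepsilon_1$, such that $T_{\varepsilon_1\textrm{-}\delta_x}^s(x)$ is thick in $T$.

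Next, for every $x\in X$ pick an open neighbourhood $V_x$ of $x$ with $V_x\subseteq\textrm{Int}_X(\delta_x[x])$; since $X$ is compact, we extract a finite subcover $V_{x_1},\dotsc,V_{x_n}$ of $X$. Now invoke the Lebesgue covering lemma in the compact uniform space $X$: there exists $\delta\in\mathscr{U}_X$ such that for every $x\in X$, the set $\delta[x]$ is contained in $\delta_{x_i}[x_i]$ for some index $i=i(x)$.

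The key computation is to verify, for this $\delta$, the inclusion
\[
T_{\varepsilon_1\textrm{-}\delta_{x_i}}^s(x_i)\subseteq T_{\varepsilon\textrm{-}\delta}^s(x)\quad\text{whenever } \delta[x]\subseteq\delta_{x_i}[x_i].
\]
Given $t$ on the left-hand side and any $y\in\delta[x]$, both $x$ and $y$ lie in $\delta_{x_i}[x_i]$, so $tx$ and $ty$ each lie in $\varepsilon_1[tx_i]$; symmetry of $\varepsilon_1$ together with $\varepsilon_1\circ\varepsilon_1\subseteq\varepsilon$ then yields $(tx,ty)\in\varepsilon$, i.e., $t(\delta[x])\subseteq\varepsilon[tx]$, so $t\in T_{\varepsilon\textrm{-}\delta}^s(x)$. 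Since the left-hand side is thick and is a subset of the right-hand side for an appropriate $i=i(x)$, thickness transfers to $T_{\varepsilon\textrm{-}\delta}^s(x)$ for every $x\in X$, which is the conclusion.

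The main obstacle I anticipate is this last composition step — ensuring that what is controlled only at the finitely many sample points $x_i$ translates into control for arbitrary $x$. Once the splitting $\varepsilon_1\circ\varepsilon_1\subseteq\varepsilon$ is fixed at the outset and the Lebesgue entourage $\delta$ is produced, the argument is essentially bookkeeping; the delicate point is keeping the symmetry of $\varepsilon_1$ in view so that the two triangle-style estimates combine correctly.
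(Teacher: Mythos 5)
Your argument is correct: the entourage-splitting $\varepsilon_1\circ\varepsilon_1\subseteq\varepsilon$, the finite subcover of the neighbourhoods $\mathrm{Int}_X(\delta_x[x])$, the Lebesgue entourage $\delta$, and the transfer of thickness from $T_{\varepsilon_1\textrm{-}\delta_{x_i}}^s(x_i)$ to $T_{\varepsilon\textrm{-}\delta}^s(x)$ all check out. This is precisely the Lebesgue-covering-lemma argument the paper invokes (it states the lemma without writing out the details), so your proof is essentially the intended one.
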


In addition, we will need another known result.

\begin{lem}[\cite{DX}]\label{lem6.8}
If $(T,X)$ is a semiflow on a compact Hausdorff space, then it is uniformly almost periodic iff it is equicontinuous surjective.
\end{lem}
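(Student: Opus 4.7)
The plan is to prove the two directions separately using the structure theory developed in $\S\ref{sec4}$, in particular Lemma~\ref{lem4.12}, together with Ellis semigroup techniques.

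For the direction ``equicontinuous surjective $\Rightarrow$ uniformly almost periodic'', I would apply Lemma~\ref{lem4.12} in succession: parts~(2), (1), and (3) upgrade $(T,X)$ to an equicontinuous \emph{flow} $(\langle T\rangle, X)$. On a compact Hausdorff space, an equicontinuous flow has its Ellis semigroup $G=\overline{\langle T\rangle}^{X^X}$ realized (via an Arzel\`a--Ascoli argument) as a compact Hausdorff topological group of homeomorphisms of $X$, inside which $T$ is dense and which acts jointly continuously. Given $\varepsilon\in\mathscr{U}_X$, the set $V_\varepsilon=\{g\in G:(gx,x)\in\varepsilon\ \forall x\in X\}$ is a closed identity neighborhood of $G$. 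A finite cover $G=g_1V_\varepsilon\cup\cdots\cup g_nV_\varepsilon$ together with density of $T$ allows me to approximate each $g_i^{-1}$ by a $k_i\in T$; the compact set $K=\{k_1,\ldots,k_n\}$ then witnesses that $A:=T\cap V_\varepsilon$ is syndetic in $T$. By construction $Ax\subseteq\varepsilon[x]$ for all $x\in X$, so $(T,X)$ is uniformly almost periodic.

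For the converse ``uniformly almost periodic $\Rightarrow$ equicontinuous surjective'', I would first establish distality. Assume to the contrary that $x\ne y$ and $t_\alpha x,t_\alpha y\to z$ along some net in $T$. Choose a closed symmetric $\varepsilon\in\mathscr{U}_X$ with $(x,y)\notin\varepsilon\circ\varepsilon$, apply uniform almost periodicity to obtain syndetic $A$ with $Aw\subseteq\varepsilon[w]$ for all $w\in X$ and compact $K\subseteq T$ with $Kt\cap A\ne\emptyset$ for all $t\in T$, and pick $k_\alpha\in K$ with $k_\alpha t_\alpha\in A$. Passing to a subnet with $k_\alpha\to k\in K$, joint continuity gives $k_\alpha(t_\alpha x)\to kz$ and $k_\alpha(t_\alpha y)\to kz$; the relations $(x,k_\alpha t_\alpha x),(y,k_\alpha t_\alpha y)\in\varepsilon$ pass to the limit to give $(x,kz),(y,kz)\in\varepsilon$, whence $(x,y)\in\varepsilon\circ\varepsilon$, a contradiction. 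Distality combined with Lemma~\ref{lem4.12}(1) then gives invertibility, and in particular surjectivity.

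The remaining and hardest step is to deduce equicontinuity from uniform almost periodicity. My first attempt would be: given $\varepsilon$, pick symmetric $\varepsilon_1$ with $\varepsilon_1^{\,3}\subseteq\varepsilon$, take syndetic $A$ with $Ax\subseteq\varepsilon_1[x]$, let $K$ witness syndeticity, and use joint continuity on the compact product $K\times X$ to produce $\delta\subseteq\varepsilon_1$ with $(x,y)\in\delta\Rightarrow(kx,ky)\in\varepsilon_1$ for all $k\in K$. For any $t\in T$, picking $k\in K$ with $kt\in A$ and combining $(x,ktx),(y,kty)\in\varepsilon_1$ with $(x,y)\in\delta$ yields $(k(tx),k(ty))\in\varepsilon_1^{\,3}\subseteq\varepsilon$. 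The obstacle is that this controls $k(tx)$ versus $k(ty)$ rather than $tx$ versus $ty$; one must ``undo'' the action of $k$. Here I would exploit the invertibility already established to form $k^{-1}$ in $\langle T\rangle$ and argue, via an Ellis-semigroup argument using that distality makes the Ellis semigroup of $(T,X)$ a group of bijections, that the family $\{k^{-1}:k\in K\}$ is uniformly equicontinuous on $X$; a second application of the uniform almost periodic hypothesis with a finer entourage then controls the distortion uniformly. An alternative route, perhaps closer to the argument of~\cite{DX}, is to build a $T$-invariant compatible uniformity on $X$ directly from the syndetic witnesses $\{A_\varepsilon\}_{\varepsilon\in\mathscr{U}_X}$ and read off equicontinuity from its $T$-invariance, bypassing the inversion issue entirely.
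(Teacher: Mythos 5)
The paper gives no proof of this lemma; it is imported verbatim from \cite{DX}. So there is no internal argument to compare yours with, and I can only judge the proposal on its own terms. Most of it is sound: the implication ``equicontinuous surjective $\Rightarrow$ uniformly almost periodic'' via Lemma~\ref{lem4.12} and the compact enveloping group is correct (one point you should make explicit: a priori only $\langle T\rangle$ is dense in $G$, but the closure of $T$ is a compact subsemigroup of the compact topological group $G$, hence a subgroup containing $\langle T\rangle$, so $T$ itself is dense), and your derivation of distality from uniform almost periodicity, followed by Lemma~\ref{lem4.12}(1) to get invertibility and surjectivity, is also correct.

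The genuine gap is exactly where you flag it: uniform almost periodicity $\Rightarrow$ equicontinuity. Neither proposed repair works as stated. For the first: the family $\{k^{-1}:k\in K\}$ is indeed uniformly equicontinuous (the map $(k,x)\mapsto k^{-1}x$ on $K\times X$ has closed graph and compact Hausdorff target, hence is continuous, hence uniformly so), but its modulus depends on $K$, and $K$ is the syndeticity witness for $A_{\varepsilon_1}$, which depends on $\varepsilon_1$; you need $\varepsilon_1^{\,3}$ finer than that modulus, which is circular, and a ``second application of UAP with a finer entourage'' merely replaces $K$ by a new, uncontrolled witness. For the second: the relations $\tilde\varepsilon=\{(x,y):(tx,ty)\in\varepsilon\ \forall t\in T\}$ always generate a $T$-invariant uniformity; the entire content of equicontinuity is that this uniformity is \emph{compatible} with the topology of $X$, so that route assumes what is to be proved.

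The step can be closed without inverting anything, by playing the failure of equicontinuity against the distality you have already established. If $(T,X)$ is not equicontinuous, there are an open $\varepsilon\in\mathscr{U}_X$ and nets $x_\delta,y_\delta,t_\delta$ with $(x_\delta,y_\delta)\in\delta$ and $(t_\delta x_\delta,t_\delta y_\delta)\notin\varepsilon$; by compactness pass to subnets so that $x_\delta\to z$, $y_\delta\to z$, $t_\delta x_\delta\to u$, $t_\delta y_\delta\to v$, with $(u,v)\notin\varepsilon$ and hence $u\ne v$. For each closed symmetric $\alpha\in\mathscr{U}_X$ take the syndetic set $A_\alpha$ with compact witness $K_\alpha$, choose $k_\delta\in K_\alpha$ with $k_\delta t_\delta\in A_\alpha$, and pass to a further subnet with $k_\delta\to k\in K_\alpha$. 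Then $(k_\delta(t_\delta x_\delta),x_\delta)\in\alpha$ with $k_\delta(t_\delta x_\delta)\to ku$ and $x_\delta\to z$, so $(ku,z)\in\alpha$, and likewise $(kv,z)\in\alpha$; hence $(ku,kv)\in\alpha\circ\alpha$ for some $k\in T$. Since $\alpha$ was arbitrary, $u$ and $v$ are proximal, contradicting distality. This is the missing piece of your proof, and is presumably close to what is done in \cite{DX}.
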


By definitions, pointwise equicontinuous $($resp. thickly stable$)$ is much more stronger than `not to be sensitive' (resp. `not to be syndetically sensitive'). But we shall show that they are equivalent under `ST' condition.

The following lemma is essentially contained in Furstenberg's argument of \cite[p.~74\,--75]{Fur} for measure-preserving cascades. For the self-closeness, we will prove it here.

\begin{lem}\label{lem6.9}
Let $T\times(X,\mathscr{B},\mu)\rightarrow(X,\mathscr{B},\mu), (t,x)\mapsto tx$ be a measure-preserving semiflow of a probability space $(X,\mathscr{B},\mu)$, where $T$ is a discrete countable semigroup with $e\in T$. If $A\in\mathscr{B}$ with $\mu(A)>0$, then $\{t\in T\,|\,\mu(t^{-1}A\cap A)>0\}$ is syndetic in $T$.
\end{lem}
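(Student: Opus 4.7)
The plan is a proof by contradiction that combines the thick/syndetic duality with an inductive combinatorial construction tailored to circumvent the non-invertibility of the semigroup action.

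First I would reformulate non-syndeticity. Write $N := \{t\in T \,|\, \mu(t^{-1}A\cap A)>0\}$ and suppose $N$ is not syndetic. By the bullet in Definition~\ref{def3.4}, $N$ must be disjoint from some thick subset of $T$, so $T\setminus N$ contains a thick set and is therefore itself thick (any superset of a thick set is thick). Since $T$ is a discrete countable semigroup, the compact subsets of $T$ are simply the finite subsets, and thickness then reads: for every finite $K\subseteq T$ there exists $t\in T$ with $Kt\subseteq T\setminus N$.

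Next I would set up the key measure identity. Given any $s_1,\dotsc,s_n\in T$, put $t_i := s_is_{i-1}\cdots s_1$ for $1\le i\le n$. Using $(ab)^{-1}A = b^{-1}a^{-1}A$ iteratively gives $t_i^{-1}A = s_1^{-1}s_2^{-1}\cdots s_i^{-1}A$, and for $1\le i<j\le n$ we can factor the intersection as
\begin{equation*}
t_i^{-1}A\cap t_j^{-1}A \;=\; s_1^{-1}\cdots s_i^{-1}\!\left(A\cap s_{i+1}^{-1}\cdots s_j^{-1}A\right).
\end{equation*}
Applying the invariance $\mu(u^{-1}B)=\mu(B)$ successively to $s_i, s_{i-1},\dotsc, s_1$ then yields
\begin{equation*}
\mu\!\left(t_i^{-1}A\cap t_j^{-1}A\right) \;=\; \mu\!\left(A\cap (s_j s_{j-1}\cdots s_{i+1})^{-1}A\right).
\end{equation*}
So the left-hand side vanishes precisely when $s_j s_{j-1}\cdots s_{i+1}\notin N$.

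The heart of the argument is then an inductive choice of the $s_k$'s. Fix an integer $n$ with $n\mu(A)>1$ and set $s_1 := e$. Having chosen $s_1,\dotsc,s_k$, consider the finite set $K_k := \{e,\,s_k,\,s_ks_{k-1},\,\dotsc,\,s_ks_{k-1}\cdots s_2\}\subseteq T$; thickness of $T\setminus N$ supplies some $s_{k+1}\in T$ with $K_ks_{k+1}\subseteq T\setminus N$. This guarantees $s_{k+1}s_k\cdots s_{i+1}\in T\setminus N$ for every $1\le i\le k$. After $n$ steps, we have $s_js_{j-1}\cdots s_{i+1}\notin N$ for all $1\le i<j\le n$, and by the key identity the $n$ sets $t_1^{-1}A,\dotsc,t_n^{-1}A$ are pairwise $\mu$-disjoint, each of measure $\mu(A)$. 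Summing forces $n\mu(A)\le\mu(X)=1$, contradicting the choice of $n$, and we conclude that $N$ is syndetic.

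The main obstacle is indeed the non-invertibility of $T$: in a group one would directly turn $\mu(s^{-1}A\cap t^{-1}A)$ into $\mu(A\cap(ts^{-1})^{-1}A)$, but the element $ts^{-1}$ is unavailable here. Choosing the chain $t_i = s_i\cdots s_1$ sidesteps this obstruction precisely because $t_i^{-1}A$ appears as the \emph{innermost} factor of $t_j^{-1}A$ when $i<j$, which is what converts pairwise comparisons back into honest recurrence questions about $A$ alone.
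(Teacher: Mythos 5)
Your overall strategy (disjointify the translates $t_i^{-1}A$ along a chain and contradict $\mu(X)=1$) is a legitimate Furstenberg-style route, and your key measure identity $\mu\bigl(t_i^{-1}A\cap t_j^{-1}A\bigr)=\mu\bigl(A\cap(s_js_{j-1}\cdots s_{i+1})^{-1}A\bigr)$ is correct. But the inductive step fails for non-abelian $T$, and the lemma is stated (and later applied in Proposition~\ref{prop6.11}) for an arbitrary countable discrete semigroup. Concretely: at stage $k+1$ you must arrange $s_{k+1}s_k\cdots s_{i+1}\notin N$ for $1\le i\le k$, i.e.\ $s_{k+1}K_k\cap N=\emptyset$ with $K_k=\{e,s_k,s_ks_{k-1},\dots,s_ks_{k-1}\cdots s_2\}$ --- the \emph{new} element sits on the \emph{left}. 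Thickness of $T\setminus N$, as the dual of the paper's syndeticity ($Kt\cap S\not=\emptyset$), only supplies some $t$ with $K_kt\subseteq T\setminus N$, i.e.\ products with the new element on the \emph{right}. The sets $K_ks_{k+1}$ and $s_{k+1}K_k$ differ in a non-commutative semigroup (compare $s_ks_{k+1}$ with $s_{k+1}s_k$), so the sentence ``this guarantees $s_{k+1}s_k\cdots s_{i+1}\in T\setminus N$'' is unjustified. Nor can you reshuffle the chain to put the newly chosen factor on the right: the factorization $t_i^{-1}A\cap t_j^{-1}A=t_i^{-1}\bigl(A\cap u^{-1}A\bigr)$ forces $t_j=ut_i$, so the last-chosen factor is always the leftmost factor of $u$. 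Your argument is fine when $T$ is abelian, but it does not prove the lemma in the stated generality.

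The paper avoids the issue with a shorter direct argument: by countability choose a finite $K\subseteq T$ with $\mu\bigl(\bigcup_{t\in K}t^{-1}A\bigr)>\mu\bigl(\bigcup_{t\in T}t^{-1}A\bigr)-\mu(A)$; since $\bigcup_{t\in Ks}t^{-1}A=s^{-1}\bigl(\bigcup_{t\in K}t^{-1}A\bigr)$ has the same measure for every $s$, and both this union and $A$ lie inside $\bigcup_{t\in T}t^{-1}A$ (here $e\in T$ is used), they cannot be essentially disjoint; hence $Ks\cap N\not=\emptyset$ for every $s$, which is exactly syndeticity of $N$ with the single constant $K$. If you want to keep your disjointification scheme you would have to either assume $T$ abelian or establish a ``left'' thick/syndetic duality, neither of which is available from the hypotheses.
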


\begin{proof}
First of all, since $\mu(X)=1$ and $\mu(A)>0$, we note that there exists a finite subset $K$of $T$ such that
\begin{gather*}
\mu\left({\bigcup}_{t\in K} t^{-1}A\right)>\mu\left({\bigcup}_{t\in T}t^{-1}A\right)-\mu(A).
\end{gather*}
Then for all $s\in T$,
\begin{gather*}
\mu\left({\bigcup}_{t\in Ks} t^{-1}A\right)=\mu\left({\bigcup}_{t\in K} t^{-1}A\right)>\mu\left({\bigcup}_{t\in T} t^{-1}A\right)-\mu(A).
\end{gather*}
This implies that $\mu\left({\bigcup}_{t\in Ks} t^{-1}A\cap A\right)>0$ for all $s\in T$; for otherwise, $e\not\in Ks$ so that
$$
\mu\left({\bigcup}_{t\in T}t^{-1}A\right)\ge \mu(A)+\mu\left({\bigcup}_{t\in Ks} t^{-1}A\right)>\mu\left({\bigcup}_{t\in T} t^{-1}A\right),
$$
a contradiction. Thus, for all $s\in T$, there is some $t\in Ks$ with $\mu(t^{-1}A\cap A)>0$. This implies that $\{t\in T\,|\,\mu(t^{-1}A\cap A)>0\}$ is syndetic in $T$ by 2 of Definition~\ref{def3.4}.

The proof of Lemma~\ref{lem6.9} is thus completed.
\end{proof}

Let $f\colon X\rightarrow X$ be a continuous transformation on a compact metric space $X$. Recall that $(f,X)$ is called an \textit{E-system} if it is TT and admits an invariant Borel probability measure with full support. Clearly an \textit{E}-semiflow is surjective. Moreover, it is known that a non-minimal \textit{E}-system $(f,X)$ is sensitive \cite[Theorem~1.3]{GW} and $N_f(U,U)$ is syndetic in $\mathbb{Z}_+$ for all non-empty open set $U$ in $X$~\cite{GW2}.

\begin{sn}\label{def6.10}
$(T,X)$ is called an \textit{E-semiflow} if it is TT and for all non-empty open subset $U$ of $X$ there is an invariant Borel probability measure $\mu$ of $(T,X)$ with $\mu(U)>0$.
\end{sn}

Clearly, an \textit{E}-system of Glasner and Weiss can induce an \textit{E}-semiflow with phase semigroup $\mathbb{Z}_+$. However, since $X$ need not be second countable, so an \textit{E}-system in the sense of Definition~\ref{def6.10} is not necessarily an \textit{E}-system in the sense of Glasner and Weiss.

\begin{prop}\label{prop6.11}
Let $(T,X)$ be an \textit{E}-semiflow with $T$ a countable discrete semigroup. Then $(T,X)$ is ST.
\end{prop}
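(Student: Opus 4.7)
The goal is to prove, for $U,V$ non-empty open in $X$, that $N_T(U,V)$ is syndetic in $T$. The natural plan is to reduce the two-set condition to a one-set condition of the form $\mu(t^{-1}W\cap W)>0$ and then invoke Lemma~\ref{lem6.9}.

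First I would use topological transitivity, which the definition of an $E$-semiflow includes, to pick some $s\in T$ with $sU\cap V\neq\emptyset$, and set
\begin{equation*}
W=U\cap s^{-1}V.
\end{equation*}
Then $W$ is a non-empty open subset of $X$. By the $E$-semiflow hypothesis there exists an invariant Borel probability measure $\mu$ on $X$ with $\mu(W)>0$. Since $T$ is countable discrete, Lemma~\ref{lem6.9} applies to $W$ and yields that
\begin{equation*}
A=\{t\in T\,|\,\mu(t^{-1}W\cap W)>0\}
\end{equation*}
is syndetic in $T$.

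Next I would translate back to $N_T(U,V)$. For $t\in A$ we have $t^{-1}W\cap W\neq\emptyset$, so there is $x\in W$ with $tx\in W$; in particular $x\in U$ and $s(tx)\in V$, giving $st\in N_T(U,V)$. Hence $sA\subseteq N_T(U,V)$. Finally, if $K$ is a finite subset of $T$ witnessing syndeticity of $A$ (i.e.~$Kt\cap A\neq\emptyset$ for every $t\in T$), then $sK$ is finite and for every $t\in T$ one has $(sK)t\cap sA\neq\emptyset$, hence $(sK)t\cap N_T(U,V)\neq\emptyset$. Thus $N_T(U,V)$ is syndetic in $T$, establishing ST.

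The only mild obstacle is the passage from ``syndetic $A$ with $sA\subseteq N_T(U,V)$'' to ``$N_T(U,V)$ itself syndetic''; since $T$ need not be a group, one cannot just translate the witnessing compact set. The fix, as indicated above, is to replace $K$ by $sK$, which remains finite (so compact in the discrete topology) and still meets every left translate $t\mapsto sKt$ inside $N_T(U,V)$. Everything else is essentially an application of the measure-theoretic recurrence Lemma~\ref{lem6.9}, and no appeal to separability or metrizability of $X$ is needed.
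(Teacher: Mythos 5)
Your proof is correct and follows essentially the same route as the paper's: reduce to the open set $W=U\cap s^{-1}V$ via topological transitivity, apply the Poincar\'e-type recurrence Lemma~\ref{lem6.9} to an invariant measure charging $W$, and translate the resulting syndetic set by $s$ into $N_T(U,V)$. You even supply a detail the paper leaves implicit, namely that a left translate $sA$ of a syndetic set remains syndetic by replacing the witnessing compact set $K$ with $sK$.
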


\begin{proof}
Let $U,V$ be non-empty open subsets of $X$. By TT, let $\tau\in T$ with $A:=U\cap \tau^{-1}V\not=\emptyset$. Then by Lemma~\ref{lem6.9}, it follows that $N_T(A,A)$ is syndetic in $T$. Since $\tau N_T(A,A)$ is syndetic in $T$ and $\tau N_T(A,A)\subseteq N_T(U,V)$, thus $(T,X)$ is ST.
\end{proof}

Therefore Theorem~\ref{thm6.12} below is a generalization and strengthening of the theorem of Glasner and Weiss on \textit{E}-systems.

The following result has already been proved by Miller and Money \cite[Theorem~4.4]{MM} with $X$ a compact metric space and by Wang and Zhong~\cite[Theorem~4.3]{WZ} with $X$ a compact Hausdorff space, using different approaches.

\begin{thm}\label{thm6.12}
If $(T,X)$ is a non-minimal ST semiflow with $X$ a uniform Hausdorff space, then it is syndetically sensitive.
\end{thm}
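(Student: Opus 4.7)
The plan is to use non-minimality to fix a sensitivity constant $\varepsilon\in\mathscr{U}_X$ once and for all, and then exploit ST to produce, for each $x\in X$ and $\delta\in\mathscr{U}_X$, a syndetic $S\subseteq T$ disjoint from the stable-time set $T^s_{\varepsilon\textrm{-}\delta}(x)$. By the contrapositive of the characterisation recalled after Definition~\ref{def3.4} (``syndetic iff meets every thick''), such an $S$ forces $T^s_{\varepsilon\textrm{-}\delta}(x)$ to be non-thick, which is precisely syndetic sensitivity with constant $\varepsilon$.

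To set up $\varepsilon$, I would use non-minimality to pick $x_0\in X$ with $Y:=\textrm{cls}_X(Tx_0)\neq X$, then $p\in X\setminus Y$, and by the uniform-Hausdorff structure a symmetric open $\varepsilon\in\mathscr{U}_X$ with $\varepsilon^{6}[p]\cap Y=\emptyset$; this makes $\varepsilon^{2}[p]$ disjoint from $\varepsilon^{2}[Y]$, and $\varepsilon^{3}[p]$ disjoint from $\varepsilon^{3}[Y]$. For arbitrary $x,\delta$, the primary candidate is $S:=N_T(\delta[x],\varepsilon[p])$, syndetic by ST. For $t\in S$, picking $y\in\delta[x]$ with $ty\in\varepsilon[p]$, stability $t\in T^s_{\varepsilon\textrm{-}\delta}(x)$ would give $(tx,ty)\in\varepsilon$, whence $tx\in\varepsilon^{2}[p]$ and $t(\delta[x])\subseteq\varepsilon^{3}[p]$. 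When $x\in Y$, the $T$-invariance of $Y$ forces $tx\in Y$, incompatible with $\varepsilon^{2}[p]\cap Y=\emptyset$; so $S\cap T^s_{\varepsilon\textrm{-}\delta}(x)=\emptyset$, finishing this case.

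The main obstacle is the complementary case $x\notin Y$: here $tx\in\varepsilon^{2}[p]$ is not \emph{a priori} inconsistent with the orbit of $x$, so $S$ by itself need not be disjoint from $T^s_{\varepsilon\textrm{-}\delta}(x)$. The plan is to pair $S$ with a second syndetic set $S':=N_T(\delta[x],\varepsilon[x_0])$ (syndetic by ST, since $x_0\in Y$) and to observe that $t\in T^s_{\varepsilon\textrm{-}\delta}(x)\cap S'$ dually forces $t(\delta[x])\subseteq\varepsilon^{3}[Y]$. Assuming for contradiction that $T^s_{\varepsilon\textrm{-}\delta}(x)$ is thick, take a compact $K\subseteq T$ witnessing the syndeticity of both $S$ and $S'$ (for instance the union of the two individual compact witnesses); thickness yields $t\in T$ with $Kt\subseteq T^s_{\varepsilon\textrm{-}\delta}(x)$, producing $r_1,r_2\in K$ with $r_1t\in S\cap T^s_{\varepsilon\textrm{-}\delta}(x)$ and $r_2t\in S'\cap T^s_{\varepsilon\textrm{-}\delta}(x)$, and hence $r_1t(\delta[x])\subseteq\varepsilon^{3}[p]$ while $r_2t(\delta[x])\subseteq\varepsilon^{3}[Y]$. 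The delicate final step --- and the technical heart of the argument --- is to combine these opposing containments with the joint continuity of the $T$-action on $X$ and the compactness of $K$ in $T$ into a genuine contradiction with $\varepsilon^{3}[p]\cap\varepsilon^{3}[Y]=\emptyset$; this collision step is where care is required in the uniform-space setting, in contrast to the Miller--Money and Wang--Zhong proofs which make essential use of compactness of $X$.
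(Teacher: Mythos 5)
Your case $x\in Y$ is correct, but the main case $x\notin Y$ has a genuine gap exactly where you flag it, and it is not a technicality that ``care'' will repair: the two containments $r_1t(\delta[x])\subseteq\varepsilon^{3}[p]$ and $r_2t(\delta[x])\subseteq\varepsilon^{3}[Y]$ involve \emph{different} elements $r_1t\neq r_2t$ of $T$, and images of the same set under different transformations can perfectly well lie in disjoint regions; neither joint continuity nor compactness of $K$ converts this into a contradiction with $\varepsilon^{3}[p]\cap\varepsilon^{3}[Y]=\emptyset$. What is needed is a \emph{single} $s\in T$ forced to send $\delta[x]$ both near $p$ and near $Y$, and since two syndetic sets in a semigroup need not even intersect, you cannot obtain it by arranging $r_1=r_2$.

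The missing mechanism --- and the way the paper closes the argument --- is absorption into the invariant set combined with a stronger use of thickness. The target you \emph{enter} should be a small neighborhood of a point of the invariant set $Y$, not of $p$: since $Y$ is invariant and $K$ is compact, joint continuity gives $\gamma\in\mathscr{U}_X$ with $K(\gamma[x_0])\subseteq\varepsilon[Y]$. Thickness of $T_{\varepsilon\textrm{-}\delta}^s(x)$ applied to the compact sets $KF$ yields, for every compact $F\subseteq T$, some $t_F$ with $KFt_F\subseteq T_{\varepsilon\textrm{-}\delta}^s(x)$; then $T_0=\bigcup_FFt_F$ is thick and meets the syndetic set $N_T(\delta[x],\gamma[x_0])$ in some $\tau$ satisfying $K\tau\subseteq T_{\varepsilon\textrm{-}\delta}^s(x)$. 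From $\tau y_0\in\gamma[x_0]$ one gets $k\tau y_0\in\varepsilon[Y]$ for \emph{all} $k\in K$, and stability of each $k\tau$ spreads this to $k\tau(\delta[x])\subseteq\varepsilon^{3}[Y]$ for all $k\in K$; since $K$ also witnesses syndeticity of $N_T(\delta[x],\varepsilon[p])$, some single $k\tau$ must in addition send a point of $\delta[x]$ into $\varepsilon[p]$, and now the collision occurs for one and the same element, giving the contradiction. Once this is done, your case split on $x\in Y$ versus $x\notin Y$ becomes unnecessary, and --- contrary to your closing remark --- compactness of $X$ plays no role anywhere: only compactness of the witnessing set $K\subseteq T$ does.
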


\begin{proof}
First of all, by non-minimality of $(T,X)$ there are $\eta\in\mathscr{U}_X, q\in X$ and a closed invariant subset $\Lambda$ of $(T,X)$ such that $\eta[q]\cap\eta[\Lambda]=\emptyset$. Assume $(T,X)$ is not syndetically sensitive; then for all $\varepsilon\ll\eta$, there are $x\in X$ and $\delta\in\mathscr{U}_X$ such that $T_{\varepsilon\textrm{-}\delta}^s(x)$ is a thick subset of $T$.

Since $(T,X)$ is ST, $N_T(\delta[x],\eta[q])$ is syndetic and so there exists a compact subset $K$ of $T$ such that $Kt\cap N_T(\delta[x],\eta[q])\not=\emptyset$. That is to say, for any $t\in T$, there is some $y\in\delta[x]$ such that $Kty\cap\eta[q]\not=\emptyset$.

Let $p\in\Lambda$ and since $\Lambda$ is invariant for $(T,X)$, there exists some $\gamma\in\mathscr{U}_X$ with $\gamma\ll\eta$ such that $K(\gamma[p])\subset\varepsilon[\Lambda]$.

Let $\mathcal{F}$ be the collection of non-empty compact subsets of $T$. Then since $T_{\varepsilon\textrm{-}\delta}^s(x)$ is a thick subset of $T$ and $K$ is compact in $T$, so for any $F\in\mathcal{F}$, there is some $t_F$ so that $(KF)t_F\subset T_{\varepsilon\textrm{-}\delta}^s(x)$.
Clearly, $T_0=\bigcup_{F\in\mathcal{F}}Ft_F$ is a thick subset of $T$.

Moreover, since $N_T(\delta[x],\gamma[p])$ is syndetic in $T$ by ST, $T_0\cap N_T(\delta[x],\gamma[p])\not=\emptyset$. Now having chosen $\tau\in T_0\cap N_T(\delta[x],\gamma[p])$, there exists some point $y_0\in\delta[x]$ such that $\tau y_0\in\gamma[p]$ so that $K\tau y_0\subset\varepsilon[\Lambda]$. By the choice of $\tau$ and the definition of $T_0$, it follows that $K\tau(\delta[x])\subset\eta[\Lambda]$. But, this contradicts that $K\tau\cap N_T(\delta[x],\eta[q])\not=\emptyset$.

Therefore, $(T,X)$ must be syndetically sensitive.
\end{proof}

Recall that if $T$ is a topological semigroup and if $A$ is a thick subset of $T$, then $As$ is thick in $T$ for all $s\in T$.
Now based on Theorem~\ref{thm6.12}, we can conclude the following dichotomy theorem for any ST semiflow on uniform spaces.

\begin{cor}\label{cor6.13}
Let $(T,X)$ be an ST semiflow with phase semigroup $T$. Then it is either syndetically sensitive or minimal thickly stable.
\end{cor}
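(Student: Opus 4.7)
The plan is to establish the contrapositive: if $(T,X)$ is ST and \emph{not} syndetically sensitive, then $(T,X)$ is both minimal and pointwise thickly stable. Minimality is immediate from Theorem~\ref{thm6.12}, so the substance of the argument is upgrading the ``there exist $x_0, \delta_0$'' statement that comes from the failure of syndetic sensitivity into a ``for every $x$ there exists $\delta$'' statement. The bridge is minimality: a single point carrying a thick stable-time set can be approached from every orbit, and the locality at that point can then be transported to every $x\in X$ by composing with a single translate $s\in T$.

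Fix $\varepsilon\in\mathscr{U}_X$ and $x\in X$. Choose a symmetric $\varepsilon_1\in\mathscr{U}_X$ with $\varepsilon_1\circ\varepsilon_1\subseteq\varepsilon$. Since $(T,X)$ fails to be syndetically sensitive, applied to $\varepsilon_1$ there exist $x_0\in X$ and $\delta_1\in\mathscr{U}_X$ such that $A:=T_{\varepsilon_1\textrm{-}\delta_1}^s(x_0)$ is thick in $T$. Pick a symmetric $\delta_1'\in\mathscr{U}_X$ with $\delta_1'\circ\delta_1'\subseteq\delta_1$. By minimality, $x_0\in\textrm{cls}_X\,Tx$, so there is some $s\in T$ with $(sx,x_0)\in\delta_1'$. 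Continuity of the self-map $y\mapsto sy$ at $x$ furnishes a $\delta\in\mathscr{U}_X$ with $(sx,sy)\in\delta_1'$ whenever $(x,y)\in\delta$. A triangle chase then places both $sx$ and $sy$ into $\delta_1[x_0]$ for every such $y$.

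Given $t\in A$ and $y\in\delta[x]$, the defining property of $A$ yields $(tsx,tx_0)\in\varepsilon_1$ and $(tsy,tx_0)\in\varepsilon_1$, hence $(tsx,tsy)\in\varepsilon$. This says $ts\in T_{\varepsilon\textrm{-}\delta}^s(x)$ for every $t\in A$; that is, $As\subseteq T_{\varepsilon\textrm{-}\delta}^s(x)$. Invoking the observation recalled immediately before the corollary, $As$ is thick because $A$ is, and therefore so is $T_{\varepsilon\textrm{-}\delta}^s(x)$. Since $\varepsilon$ and $x$ were arbitrary, this gives pointwise thick stability, completing the dichotomy.

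The only slightly delicate step I anticipate is the uniform-structure bookkeeping in selecting $\varepsilon_1$ and $\delta_1'$, together with the one invocation of continuity of $s$; once the transfer inclusion $As\subseteq T_{\varepsilon\textrm{-}\delta}^s(x)$ is in hand, thickness is immediate from the $A$-thick $\Rightarrow$ $As$-thick lemma and minimality has already been supplied by Theorem~\ref{thm6.12}.
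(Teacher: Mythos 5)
Your proof is correct and follows essentially the same route as the paper's: minimality comes from Theorem~\ref{thm6.12}, and the thick stable-time set at the special point $x_0$ is transported to an arbitrary point by a single translate $s\in T$ together with the fact that $As$ is thick whenever $A$ is. Your uniform-structure bookkeeping (choosing $\varepsilon_1$ with $\varepsilon_1\circ\varepsilon_1\subseteq\varepsilon$ and the symmetric $\delta_1'$) is in fact more careful than the paper's own write-up, which elides the entourage-halving step.
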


\begin{proof}
Clearly if $(T,X)$ is syndetically sensitive, then it is never thickly stable. Next assume $(T,X)$ is not syndetically sensitive; and then by Theorem~\ref{thm6.12}, it follows that $(T,X)$ is minimal.

Now since $(T,X)$ is not syndetically sensitive, then for any $\varepsilon\in\mathscr{U}_X$, there are some $x\in X$ and $\delta\in\mathscr{U}_X$ such that $T_{\varepsilon\textrm{-}\delta}^s(x)$ is thick in $T$. Let $y\in X$ be arbitrary. Since $(T,X)$ is minimal, we can find some $\alpha\in\mathscr{U}_X$ and $s\in T$ such that $s(\alpha[y])\subseteq\delta[x]$. So $T_{\varepsilon\textrm{-}\alpha}^s(y)\supseteq T_{\varepsilon\textrm{-}\delta}^s(y)s$ is thick in $T$. Since $\varepsilon$ and $y$ both are arbitrary, thus $(T,X)$ is thickly stable.
\end{proof}

In fact, by Theorem~\ref{thm6.12}, we can obtain the following

\begin{cor}\label{cor6.14}
Let $(T,X)$ be ST with $T$ an almost right \textit{C}-semigroup. Then it is either sensitive or minimal equicontinuous.
\end{cor}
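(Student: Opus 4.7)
The plan is to derive Corollary~\ref{cor6.14} from the finer dichotomy already established in Corollary~\ref{cor6.13}, using the almost right \textit{C}-semigroup hypothesis on $T$ to upgrade each branch. Corollary~\ref{cor6.13} tells us that any ST semiflow is either syndetically sensitive or is minimal and pointwise thickly stable. In the first alternative, the remark immediately after Definition~\ref{def6.6} already gives sensitivity, placing us in the ``sensitive'' conclusion.

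The work is concentrated in the second alternative: one must show that a minimal, pointwise thickly stable semiflow whose phase semigroup is almost right \textit{C} is equicontinuous in the sense of Definition~\ref{def4.5}. I would proceed in two steps. First, promote pointwise thick stability to a statement uniform in $y\in X$ by mimicking the transfer argument in the proof of Corollary~\ref{cor6.13}: given $\varepsilon$, choose $\varepsilon_0$ with $\varepsilon_0\circ\varepsilon_0\subseteq\varepsilon$; obtain $\delta$ with $T_{\varepsilon_0\textrm{-}\delta}^s(x)$ thick at some $x$; and for each $y\in X$ use minimality together with joint continuity to produce $\alpha_y$ and $s_y$ with $s_y\alpha_y[y]\subseteq\delta[x]$, so that $T_{\varepsilon\textrm{-}\alpha_y}^s(y)\supseteq T_{\varepsilon_0\textrm{-}\delta}^s(x)\cdot s_y$ is again thick. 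Second, promote this uniform thickness to equality with $T$ by exploiting the almost right \textit{C}-semigroup structure: given $t_0\in T$, select $s$ in the dense subset $G=\{s\in T\mid\textrm{cls}_T(T\setminus Ts)\textrm{ is compact in }T\}$; the decomposition $T=\textrm{cls}_T(T\setminus Ts)\cup Ts$ together with the thickness of the stable time set at $y$ and the syndetic return-time sets furnished by ST (which must meet every thick set) should force $t_0$ into a translate of $T_{\varepsilon\textrm{-}\delta}^s(y)$, after which composing a few entourages yields $t_0\delta'[y]\subseteq\varepsilon[t_0 y]$ with $\delta'$ depending only on $\varepsilon$.

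The main obstacle is precisely this second step. A thick subset of an almost right \textit{C}-semigroup need not be cofinite in $T$, so thickness alone cannot make a stable time set coincide with $T$; both the algebraic structure of $T_{\varepsilon\textrm{-}\delta}^s(y)$ and the compactness of the remainder $\textrm{cls}_T(T\setminus Ts)$ for $s\in G$ must be used together. The cases $t_0\in Ts$ and $t_0\in\textrm{cls}_T(T\setminus Ts)$ have to be handled uniformly in $y\in X$ and independently of the choice of $s\in G$; moreover, since Corollary~\ref{cor6.14} does not presuppose compactness of $X$, the required uniformity cannot be obtained from a Lebesgue-covering argument of the sort behind Lemma~\ref{lem6.7}. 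This is the technically most delicate step and is where the bulk of the work will lie.
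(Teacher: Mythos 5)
There is a genuine gap, and it stems from choosing the wrong case split. You branch on \emph{syndetic} sensitivity via Corollary~\ref{cor6.13} and are then left, in the second branch, with only ``minimal and pointwise thickly stable'' as hypotheses from which to extract equicontinuity. But thick stability is far too weak for this: as you yourself observe, a thick subset of an almost right \textit{C}-semigroup (e.g.\ of $\mathbb{Z}_+$) is in general nowhere near cofinite, and nothing in your outline closes that gap --- you explicitly defer ``the bulk of the work'' to a step you have not carried out. Worse, your second branch does not even carry the hypothesis ``not sensitive,'' so you are effectively trying to prove that minimal $+$ thickly stable $+$ almost right \textit{C} implies equicontinuous, a statement that is not what the corollary requires and that your argument gives no real purchase on. The correct dichotomy to exploit is on sensitivity itself: the negation of \emph{sensitive} (Definition~\ref{def6.6}.1) says that for \emph{every} $\varepsilon\in\mathscr{U}_X$ there exist $x\in X$ and $\alpha\in\mathscr{U}_X$ with $T_{\varepsilon\textrm{-}\alpha}^s(x)=T$ --- the \emph{entire} semigroup, not merely a thick subset. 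That is the extra information your route throws away, and it is exactly what makes the almost right \textit{C} hypothesis usable.

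The paper's argument runs: assume not sensitive; then a fortiori not syndetically sensitive, so Theorem~\ref{thm6.12} gives minimality. Fix $\varepsilon$ and take $x,\alpha$ with $T_{\varepsilon\textrm{-}\alpha}^s(x)=T$. Given $y\in X$, minimality (plus density of $G=\{t\mid \textrm{cls}_T(T\setminus Tt)\textrm{ compact}\}$ in $T$) yields $\tau\in G$ and $\delta'$ with $\tau(\delta'[y])\subseteq\alpha[x]$; then every $t\in T\tau$ satisfies $t(\delta'[y])\subseteq\varepsilon[ty]$ (up to a doubling of $\varepsilon$, using that the whole image sits in $\varepsilon[t'x]$ for $t=t'\tau$), while the remainder $T\setminus T\tau$ is relatively compact, so joint continuity supplies a smaller $\delta$ handling those finitely-compactly-many $t$ at once. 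Hence $T_{\varepsilon\textrm{-}\delta}^s(y)=T$ for all $y$, i.e.\ equicontinuity. Note that no ST, no compactness of $X$, and no syndetic return-time sets are needed in this half --- only minimality, the pointwise negation of sensitivity, and the almost right \textit{C} structure. If you want to salvage your outline, replace the appeal to Corollary~\ref{cor6.13} by the direct split ``sensitive / not sensitive'' and rebuild your step two around the identity $T_{\varepsilon\textrm{-}\alpha}^s(x)=T$ rather than around thickness.
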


\begin{proof}
Assume $(T,X)$ is not sensitive. So it is not syndetically sensitive. By Theorem~\ref{thm6.12}, $(T,X)$ is minimal. Since $(T,X)$ is not sensitive, then for any $\varepsilon\in\mathscr{U}_X$, we can find some $x\in X$ and $\alpha\in\mathscr{U}_X$ such that $T_{\varepsilon\textrm{-}\alpha}^s(x)=T$. Given any $y\in X$, since $(T,X)$ is minimal, there are some $\delta^\prime\in\mathscr{U}_X$ and some $\tau\in \{t\in T\,|\,\textrm{cls}_T(T\setminus Tt)\textrm{ is compact in }T\}$ such that $\tau(\delta^\prime[y])\subseteq\alpha[x]$. In addition, since $T_0:=T\setminus T\tau$ is relatively compact by Definition~\ref{def5.1}, there is a $\delta\in\mathscr{U}_X$ with $\delta<\delta^\prime$ such that $t(\delta[y])\subseteq\varepsilon[ty]$ for all $t\in T_0$. Thus $T_{\varepsilon\textrm{-}\delta}^s(y)=T$. This shows that $(T,X)$ is equicontinuous.

Finally, if $(T,X)$ is sensitive, it is never minimal equicontinuous. Thus we have concluded Corollary~\ref{cor6.14}.
\end{proof}

If $(T,X)$ is TT with dense almost periodic points, then it is called an \textit{M-semiflow} (\cite{GW, DT}). By Lemma~\ref{lem6.3}, \textit{M}-semiflow is ST.
Thus Corollary~\ref{cor6.14} generalizes \cite[Theorem~1.41]{G03} that is for \textit{M-flow} on a compact metric space $X$ and \cite[Main result]{KM} that is for \textit{M-semiflow} on a Polish space with $T$ a right \textit{C}-semigroup by using different approaches.

The following corollary has already been observed in~\cite[Corollary~2.7]{DT}; but its proof presented in \cite{DT} is insufficient. Here we prove it using Theorem~\ref{thm6.12}.

\begin{cor}\label{cor6.15}
Let $(T,X)$ be ST with $X$ a compact Hausdorff space and with $T$ an abelian semigroup. Then $(T,X)$ is either sensitive or minimal uniformly almost periodic.
\end{cor}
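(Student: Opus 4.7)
The plan is to deduce the dichotomy from Theorem~\ref{thm6.12} by reducing the non-sensitive case to equicontinuity and then invoking Lemma~\ref{lem6.8}. Assume $(T,X)$ is not sensitive. Since syndetically sensitive implies sensitive, $(T,X)$ is also not syndetically sensitive, so Theorem~\ref{thm6.12} forces it to be minimal, and the argument of Corollary~\ref{cor6.13} simultaneously yields thick stability. Minimality on compact Hausdorff $X$ with $T$ abelian provides surjectivity via Lemma~\ref{lem4.11}, so it suffices to establish equicontinuity; then by Lemma~\ref{lem6.8} the semiflow is uniformly almost periodic, and the dichotomy is complete since a sensitive semiflow is never equicontinuous.

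To prove equicontinuity, I would first apply Lemma~\ref{lem6.7} to upgrade pointwise thick stability to the uniform form: for every $\varepsilon\in\mathscr{U}_X$ there is $\delta\in\mathscr{U}_X$ such that $T^s_{\varepsilon\textrm{-}\delta}(x)$ is thick in $T$ for every $x\in X$. Fix a symmetric $\varepsilon_1$ with $\varepsilon_1\circ\varepsilon_1\subseteq\varepsilon$ and the associated $\delta$. Given any $(y,z)\in\delta$ and any $t\in T$, apply thickness of $T^s_{\varepsilon_1\textrm{-}\delta}(y)$ to the compact set $\{e,t\}\subseteq T$ to obtain $s\in T$ with both $s$ and $ts$ in $T^s_{\varepsilon_1\textrm{-}\delta}(y)$; this gives $(sy,sz)\in\varepsilon_1$ and $(tsy,tsz)\in\varepsilon_1$, and $T$-abelianness rewrites the latter as $(s(ty),s(tz))\in\varepsilon_1$.

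The main obstacle is to cancel the auxiliary $s$ and conclude $(ty,tz)\in\varepsilon$. My approach is to note that the set $B_t=\{s\in T:\{e,t\}s\subseteq T^s_{\varepsilon_1\textrm{-}\delta}(y)\}$ is itself thick (since $\{s:Ks\subseteq A\}$ is thick whenever $A$ is thick and $K$ compact), while every point of the minimal compact Hausdorff semiflow is almost periodic, so $N_T(ty,V)$ is syndetic for every neighborhood $V$ of $ty$ and hence meets $B_t$. Extracting a net $\{s_\alpha\}\subseteq B_t$ with $s_\alpha(ty)\to ty$ and, by compactness, a simultaneous subnet limit $s_\alpha(tz)\to w$, the estimate $(s_\alpha(ty),s_\alpha(tz))\in\varepsilon_1$ passes to the limit to give $(ty,w)\in\overline{\varepsilon_1}\subseteq\varepsilon$. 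The subtle remaining point is to identify $w$ with $tz$; this I would address via the Ellis semigroup of $(T,X)$, exploiting that the pointwise accumulation $p$ of $\{s_\alpha\}$ lies in a minimal left ideal with $p(ty)=ty$ and that abelianness together with minimality force this $p$ to act as the identity on the orbit closure involved, whence $p(tz)=tz=w$.

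Once equicontinuity is in hand, Lemma~\ref{lem6.8} closes the proof: equicontinuous plus surjective on a compact Hausdorff space is exactly uniformly almost periodic, so $(T,X)$ is minimal and uniformly almost periodic, establishing the dichotomy of Corollary~\ref{cor6.15}.
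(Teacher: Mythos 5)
Your overall architecture is fine up to a point: assuming non-sensitivity, Theorem~\ref{thm6.12} gives minimality, Lemma~\ref{lem4.11} gives surjectivity, and Lemma~\ref{lem6.8} would convert equicontinuity into uniform almost periodicity. The genuine gap is the cancellation step in your equicontinuity argument. You arrive at $(s(ty),s(tz))\in\varepsilon_1$ for all $s$ in a thick set $B_t$ and need $(ty,tz)\in\varepsilon$; your proposed fix is to take a net $s_\alpha\in B_t$ with $s_\alpha(ty)\to ty$, pass to an Ellis-semigroup limit $p$ with $p(ty)=ty$, and assert that ``abelianness together with minimality force this $p$ to act as the identity,'' whence $p(tz)=tz=w$. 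That assertion is unjustified: in a minimal non-distal system (even a $\mathbb{Z}$-subshift), any minimal left ideal of $E(X)$ contains an idempotent $u$ and points $x,x'$ with $ux=x$ but $ux'\ne x'$ --- if every such idempotent were the identity, the system would already be distal. Since the triviality of point-stabilizers in $E(X)$ for abelian $T$ is essentially equivalent to the distality/equicontinuity you are trying to establish, the step is circular; and nothing in that step uses non-sensitivity, which is the only hypothesis that could rescue it. (There is also no reason given why the accumulation point $p$ of your net should lie in a minimal left ideal.)

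The difficulty evaporates if you use the full strength of ``not sensitive'' rather than downgrading to thick stability. Negating Definition~\ref{def6.6}.1 gives, for each $\eta\in\mathscr{U}_X$, a point $x_0$ and a $\delta\in\mathscr{U}_X$ with $T_{\eta\textrm{-}\delta}^s(x_0)=T$, i.e.\ $t(\delta[x_0])\subseteq\eta[tx_0]$ for \emph{every} $t\in T$, not merely a thick set of $t$. Since $(T,X)$ is minimal on a compact Hausdorff space, $x_0$ is almost periodic, so $A=N_T(x_0,\delta[x_0])$ is syndetic. For $a\in A$ and arbitrary $s\in T$, abelianness gives $(sx_0,asx_0)=(sx_0,s(ax_0))\in\eta$ because $ax_0\in\delta[x_0]$ and $s$ is an $(\eta,\delta)$-stable time; density of $Tx_0$ then yields $(z,az)\in\varepsilon$ for all $z\in X$ and $a\in A$ (having chosen $\eta\ll\varepsilon$). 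This is uniform almost periodicity directly --- no Ellis semigroup, no separate equicontinuity argument, no cancellation of an auxiliary $s$ --- and it is the route the paper takes.
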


\begin{proof}
If $(T,X)$ is sensitive, then it is not equicontinuous and so not uniformly almost periodic by Lemma~\ref{lem6.8}.

Now assume $(T,X)$ is not sensitive. Then by Theorem~\ref{thm6.12}, $(T,X)$ is minimal; otherwise it is syndetically sensitive. Moreover, for all $\varepsilon\in\mathscr{U}_X$, we can find some $x\in X$ and $\delta\in\mathscr{U}_X$ such that $T_{\varepsilon\textrm{-}\delta}^s(x)=T$. We will show that $(T,X)$ is uniformly almost periodic.

Let $\varepsilon\in\mathscr{U}_X$ be any given; and take an $\eta\in\mathscr{U}_X$ with ``$\eta\ll\varepsilon$'' and then we can choose $x_0\in X$ and $\delta\in\mathscr{U}_X$ such that $T_{\eta\textrm{-}\delta}^s(x_0)=T$. So $t(\delta[x_0])\subseteq\eta[tx_0]$ for all $t\in T$. Let $A=N_T(x_0,\delta[x_0])$, which is syndetic in $T$.
Now for any $a\in A$ and $s\in T$, we have $ax_0\in\delta[x_0]$ and then it follows that $(sx_0,asx_0)=(sx_0,sax_0)\in\eta$. Since $Tx_0$ is dense in $X$, hence $(z,tz)\in\varepsilon$ for all $z\in X$ and $t\in A$. Thus $(T,X)$ is uniformly almost periodic
This proves Corollary~\ref{cor6.15}.
\end{proof}

\subsection*{\textbf{Acknowledgments}}
This project was supported by National Natural Science Foundation of China (Grant Nos. 11431012 and 11271183) and PAPD of Jiangsu Higher Education Institutions.



\end{document}